\newtheorem{thm}{Theorem}[section]
\newtheorem{lem}{Lemma}[section]
\newtheorem{prop}{Proposition}[section]
\newtheorem{rem}{Remark}[section]
\newtheorem{defi}{Definition}[section]
\newcommand{\eps}{\varepsilon}
\newcommand{\R}{\mathbb{R}}
\newcommand{\N}{\mathbb{N}}
\renewcommand{\div}{{\rm div}\,}
\newcommand{\Id}{{\rm Id}\,}
\newcommand{\loc}{{\rm loc}\,}
\newcommand{\Int}{\displaystyle \int}
\def\ud{\underline}
\def\ov{\overline}
\def\d{\partial}
\def\dj{\Delta_j}
\def\tilde{\widetilde}
\def\hat{\widehat}
\def\div{{\rm div}\,}
\def\cC{{\mathcal C}}
\def\cD{{\mathcal D}}
\def\cK{{\mathcal K}}
\def\cM{{\mathcal M}}
\def\cP{{\mathcal P}}
\def\cR{{\mathcal R}}
\def\cS{{\mathcal S}}
\begin{document}

\title{A global existence result for a zero Mach number system}
\author{Xian LIAO\thanks{Universit\'{e} Paris-Est Cr\'{e}teil, LAMA, UMR 8050, 61 Avenue de G\'{e}n\'{e}ral de Gaulle, 94010 Cr\'{e}teil Cedex, France. Email: liaoxian925@gmail.com}}

\date{16 November 2012}

\maketitle

\begin{abstract}
\noindent{\textbf {Abstract. }}This paper is to study  global-in-time existence of weak solutions to zero Mach number system which derives from the full Navier-Stokes  system, under a special relationship  between the  viscosity coefficient and the  heat conductivity coefficient such that, roughly speaking, the source term in the equation for the newly introduced divergence-free velocity vector field vanishes.
In dimension two, thanks to a local-in-time existence result of a unique strong solution in  critical Besov spaces  given in \cite{Danchin-Liao},   for arbitrary large initial data,  we will show that this unique strong solution exists globally in time, by a weak-strong uniqueness argument.
\\

\noindent{\textbf {Mathematics Subject Classification (2010). }}Primary 35Q30; Secondary 76R50.
\\

\noindent{\textbf {Keywords. }}Zero Mach number system,  global-in-time existence result,  weak-strong uniqueness, Besov spaces.
\end{abstract}


\section{Introduction}

\subsection{Derivation of a zero Mach number system}

The free evolution of a viscous and heat conducting compressible Newtonian fluid obeys the following  Navier-Stokes  system:
\begin{equation}\label{eq:fullnewtonian}
\left\{
\begin{array}{ccc}
\d_t\rho+\div(\rho v)&=&0,\\
\d_t(\rho v)+\div(\rho v\otimes v)-\div \sigma+\nabla p&=&0,\\
\d_t(\rho e)+\div(\rho v e)-\div(k \nabla \vartheta)+p\div v&=&\sigma\cdot Dv.
\end{array}
\right.
\end{equation}

The above System \eqref{eq:fullnewtonian} describes the evolution of the mass density $\rho=\rho(t,x)$, the momentum $\rho v=\rho(t,x)v(t,x)$ and   the internal energy $\rho e=\rho(t,x)e(t,x)$ respectively, where $t\in \R^+$ is the positive time variable and the space variable $x$ belongs to $\R^d$ with $d\geq 2$. The scalar functions $p=p(t,x)$ and $\vartheta=\vartheta(t,x)$ denote the pressure and the temperature respectively, and the viscous strain tensor $\sigma$ is given by
$$\sigma:=2\mu  Sv+\nu \, \div v \,\Id,$$
where  $\mu=\mu(\rho,\vartheta)$ and $\nu=\nu(\rho,\vartheta)$ are the regular viscosity coefficients, $\Id$ is the  identity tensor and the symmetric rate-of-deformation tensor is denoted by
$$
Sv:=\frac 12(\nabla v+Dv)\quad\textrm{with}\quad (\nabla v)_{ij}:=\d_i v^j,\quad (Dv)_{ij}:=\d_j v^i.
$$
The thermal conductivity coefficient  $k=k(\rho,\vartheta)$ is also a smooth function of $\rho$ and $\vartheta$.

There should be another two equations of  state   such that System \eqref{eq:fullnewtonian} is complete and  in this work, we will take  \emph{ideal gas} model:
\begin{equation}\label{state}
p=R\rho\vartheta,\qquad e=C_v\vartheta,
\end{equation}
where the two constants $R$ and $C_v$, denote the ideal gas constant and the specific heat capacity at constant volume respectively.


In this paper we will consider  a  zero Mach number  system, which derives  from System \eqref{eq:fullnewtonian} by letting Mach number vanish and admitting a specific relationship between variable physical coefficients $k$ and $\mu$  (see \eqref{cond:coeff} below). That is, we assume the fluid to be highly subsonic and hence the compression due to pressure is neglectable and in addition, we restrict ourselves to effectively heat-conducting and viscous fluids.

More precisely, firstly, we introduce the dimensionless Mach number $\eps$ as the ratio of the velocity $v$ over the reference sound speed, then  the rescaled triplet
$$
\left(\rho_\eps(t,x)=\rho\Bigl(\frac{t}{\eps},x\Bigr),\quad
v_\eps(t,x)=\frac{1}{\eps}v\Bigl(\frac{t}{\eps},x\Bigr),\quad
\vartheta_\eps(t,x)=\vartheta\Bigl(\frac{t}{\eps},x\Bigr)\right)
$$
satisfies
\begin{equation}\label{eq:epsilon}\left\{
\begin{array}{ccc}
\d_t\rho_\eps+\div(\rho_\eps v_\eps)&=&0,\\[1ex]
\d_t(\rho_\eps v_\eps)+\div (\rho_\eps v_\eps\otimes v_\eps)-\div\sigma_\eps+\frac{\nabla p_\eps}{\eps^2}&=&0,\\[1ex]
\frac{1}{\gamma-1}(\d_tp_\eps+\div(p_\eps v_\eps ))-\div(k_\eps\nabla \vartheta_\eps)+p_\eps\div v_\eps&=&\eps^2\sigma_\eps\cdot Dv_\eps,
\end{array}
\right.
\end{equation}
where $\gamma:=C_p/{C_v}=1+R/{C_v}$ with the constant $C_p$ denoting the specific heat capacity at constant pressure and
$$
\sigma_\eps=2\mu_\eps Sv_\eps+\nu_\eps \div v_\eps \Id,\quad p_\eps=R\rho_\eps\vartheta_\eps,\quad \varsigma_\eps(t,x)=\frac{1}{\eps}\varsigma\Bigl(\frac{t}{\eps},x\Bigr)\hbox{ with }\varsigma=\mu,\nu,k\hbox{ respectively}.
$$

Let $\eps$ go to $0$, that is, the pressure $p_\eps$ equals to a positive constant $P_0$ by Equations $\eqref{eq:epsilon}_2$ and $\eqref{eq:epsilon}_3$,  and hence the   rescaled system \eqref{eq:epsilon} becomes the following zero Mach number system immediately (see \cite{Alazard06}, \cite{PLions96} or \cite{Zeytounian04} for detailed computation):
\begin{equation}\label{system_origin}
\left\{ \begin{array}{ccc}
\d_t \rho+\div(\rho v) & =&0,\\
 \d_{t}(\rho v)+\div(\rho v\otimes v )-\div\sigma + \nabla\Pi & = & 0, \\
 \div v - \div(\alpha k \nabla\vartheta)& = &0,
\end{array} \right.
\end{equation}
with   $\alpha:=\frac{\gamma -1}{\gamma P_0}$ being a positive constant.
Let us point out here that a rigorous justification of the above low-Mach number limit System \eqref{system_origin} has been presented by T. Alazard, see  \cite{Alazard06}.

The unknowns in the above system \eqref{system_origin} turn to be the density $\rho$, the velocity field $v$ and some unknown ``pressure''  $\Pi$.
Thanks to equations of state \eqref{state}, the temperature $\vartheta$ is thus equal to $P_0/(R\rho)$.
 Consequently, all the physical coefficients $k,\mu,\nu$ can be viewed as  regular functions of the density $\rho$ only.
 Set  $\kappa=\kappa(\rho)=\alpha k\vartheta$, then Equation $\eqref{system_origin}_3$ becomes
 \begin{equation}\label{eq:div}
 \div(v+\kappa\nabla\ln\rho)=0.
 \end{equation}


 In order to take advantage of the ``almost divergence-free'' condition of Equation $\eqref{system_origin}_3$, just as in \cite{Danchin-Liao},  we introduce a new  solenoidal ``velocity'' field $u$ as follows:
\begin{equation*}\label{transform}
u=v-\alpha k\nabla\vartheta=v+\kappa\nabla\ln\rho.
\end{equation*}
We  next try to rewrite the terms concerning the original velocity field $v$ in System \eqref{system_origin} in light of the newly introduced velocity $u$.
Firstly, it is easy to see that
\begin{equation*}\label{u,v}
\rho v =\rho u-\kappa\nabla\rho.
\end{equation*}
It is hence easy to write $\d_t(\rho v)$ as (noticing that $\kappa$ can be viewed as a regular function of $\rho$ only):
\begin{equation*}\label{transform:d_t}
\d_t(\rho v)=\d_t (\rho u)-\nabla(\kappa\d_t\rho).
\end{equation*}
We can also rewrite the convection term in the conservation law of momentum  as following:
\begin{equation*}
\div (\rho v\otimes v)=\div(\rho v\otimes u)+\div(v\otimes (-\kappa\nabla\rho)).
\end{equation*}
Observing the following two equalities
$$
Sv-Dv=Au:=\frac12 (\nabla u-Du)\,
\quad\textrm{and}\quad-\div(\nu\, \div v\,\Id)=-\nabla(\nu\,\div v)
,$$
the viscosity term $-\div\sigma$ thus reads as
\begin{equation*}
-\div(2\mu Au)-\div(2\mu Dv)-\nabla(\nu\div v)
=-\div(2\mu Au)+\div(v\otimes 2\nabla\mu)-\nabla(\div(2\mu v)+\nu\div v).
\end{equation*}

Therefore, if we admit the following relationship between the heat conductivity coefficient $k$ and the viscosity coefficient $\mu$:
\begin{equation}\label{cond:coeff}
-\kappa(\rho)+2\mu'(\rho)=0,\quad\textrm{i.e.}\quad k(\vartheta)+2C_p\vartheta \mu'(\vartheta)=0,
\end{equation}
then by  introducing a  new ``pressure'' $\pi$ as
$$\pi=\Pi-\kappa\d_t\rho-\div(2\mu\, v)-\nu\,\div v,$$
 System (\ref{system_origin}) recasts in the following system for the new unknown triplet $(\rho, u,\nabla\pi)$:
\begin{equation}\label{system}\left\{\begin{array}{ccc}
\d_{t}\rho+\div(\rho v)=\d_t \rho+\div(\rho u) -\div(\kappa\nabla\rho) & = & 0,\\
\d_{t}(\rho u)+\div(\rho v \otimes u)-\div(2\mu A u)+\nabla\pi&=&0,\\
\div u&=&0.
\end{array}\right.\end{equation}

It is easy to check that if the triplet $(\rho,u,\nabla\pi)$ solves the above system \eqref{system} in distribution sense  which, also satisfies
\begin{align}\label{cond:reverse}
&\rho(t,x)\in [\ud\rho,\ov\rho],\,\forall t\in [0,+\infty),\,x\in \R^d,\nonumber\\
&\rho-c_0\in L^\infty\Bigl([0,+\infty);L^2(\R^d) \Bigr)\cap L^2_\loc\Bigl([0,+\infty);H^1(\R^d)\Bigr),\\
& \d_t\rho\in L^2_\loc\Bigl([0,+\infty);H^{-1}(\R^d)\Bigr),\nonumber\\
& u,\, v:=u-\kappa\nabla\ln\rho \in L^2_\loc\Bigl([0,+\infty);(L^2(\R^d))^d\Bigr),\nonumber
\end{align}
with three positive constants $\ud\rho,\ov\rho,c_0$, then the above calculation  from System \eqref{system_origin} to System \eqref{system} makes sense and $(\rho,v)$, together with some distribution  pressure $\Pi$, satisfies System \eqref{system_origin} at least in the sense of distribution, under the coefficient condition \eqref{cond:coeff}.

 Notice that System \eqref{system_origin} also can be viewed as Kazhikhov-Smagulov type models, which describe the motion of mixtures such as a salt or pollutant spread  in a compressible fluid, see \cite{BV}.
  There, the divergence of the velocity $v$ is related to the derivatives of the density $\rho$ by Fick's law
  \begin{equation}\label{Fick}
  \div v+\div(\kappa_0\nabla\ln \rho)=0,
  \end{equation}
  where the diffusive coefficient $\kappa_0$ is a positive  constant.
  Therefore, in that case, by view of \eqref{eq:div}, Relation \eqref{cond:coeff} implies that the viscosity coefficient $\mu$ equals to $\kappa_0\rho/2$, up to a constant.
  Another interesting example  proposed in \cite{Majda} is a low Mach number combustion model  with a constant thermic coefficient $k$, where   Relation \eqref{cond:coeff} entails  $\mu=-k\ln\vartheta/(2C_p)$, up to a constant.
More generally, by virtue of \eqref{cond:coeff}, we consider the gases which become less viscous as the temperature increases (or the density decreases) and moreover, if there is more thermal conduction, the viscosity decreases faster.
However, unlike the physical coefficients $k$ and $\mu$, the other viscosity coefficient $\nu$ plays no role here.

\subsection{Main results}
In this work, we want to consider Cauchy problem of System \eqref{system_origin} globally in time.
There is  a long history of   global existence problem of  weak  solutions (roughly speaking, solutions in the sense of distribution with bounded physical energy) with \emph{large} initial data  to Navier-Stokes system.
As early as in 1934, J. Leray in \cite{Leray34} proved such a global existence result to the classical incompressible Navier-Stokes equations (System \eqref{system_origin} with constant density and temperature) in dimension $d=2$ and $3$, and especially in dimension $2$ global regularity and uniqueness also hold.
 Another big breakthrough is due to P.-L. Lions, who treated a somehow simplified compressible Navier-Stokes system (the first two equations in System \eqref{eq:fullnewtonian} under the gamma-type pressure law  $p(\rho)=a\rho^\gamma$, $a>0$)  in the nineties of last century.
  Indeed in \cite{PLions96vol2}, observing the so-called effective viscous flux, he showed that if $\gamma\geq 9/5$ in dimension $3$, weak solutions exist globally in time.
 In the very beginning of this century, E. Feireisl  improved this result to the case $\gamma>3/2$, see \cite{Feireisl}.
  However, for the compressible case, global regularity or uniqueness for large data is still unknown, even in dimension $d=2$,  due to a lack of estimation in the vacuum region.
 Recently in  \cite{B-D}, D. Bresch and B. Desjardins  extended this type of result to   the full Navier-Stokes system \eqref{eq:fullnewtonian}   by using the so-called BD-entropy,  under some specific assumptions on   equations of state and physical coefficients which are different from the coefficient relationship \eqref{cond:coeff} here.
  Let us  emphasize also that, in \cite{Feireisl04}, E. Feireisl presented a global existence result for System \eqref{eq:fullnewtonian}, but referring to the so-called ``variational'' solutions.

On the other side, there are also numerous works involving  global-in-time  existence of strong  solutions  (unique and regular in general) to Navier-Stokes system, but with \emph{small} initial data in general.
Let's just mention that for the classical incompressible Navier-Stokes equations, in 1964, H. Fujita and T. Kato  obtained a unique global  solution, see \cite{Fujita-Kato}.
With high regularity assumptions, in dimension $3$, A. Matsumura and T. Nishida \cite{M-N} studied   the motion of  viscous and heat-conductive gases.
In \cite{Danchin00} and \cite{Danchin01global}, R. Danchin considered   the movement of  barotropic compressible fluids and   compressible viscous and heat-conducting gases respectively, but in critical Besov spaces.

When there is no heat conduction, System \eqref{system_origin} becomes the density-dependent incompressible Navier-Stokes system.
See the book \cite{A-K-M} and the references therein for the existence results of the associated initial-boundary value problem.
Global well-posedness of the Cauchy problem   was demonstrated by R. Danchin in \cite{Danchin03} and  in addition in dimension $2$, he also examined \emph{large} initial data   case  in \cite{Danchin04}.
There are also a few works devoted to  global-in-time solutions to the Cauchy problem of zero Mach number  system \eqref{system_origin}, under some smallness assumptions on the diffusion coefficient $\kappa$ or on the initial data in general.
In the pioneering work \cite{Kazhikhov-Smagulov}, A.V. Kazhikhov and Sh. Smagulov addressed the initial boundary value problem of  a somehow \emph{simplified} mathematical model for a two-component mixture involving mass diffusion inside.
More precisely, the constant diffusion coefficient $\kappa_0$ in the Fick law \eqref{Fick} was assumed to be \emph{small} compared with the \emph{constant} viscosity coefficient $\mu$.
 Besides, after taking the transformation \eqref{transform}, they just neglected the term $\div(\kappa_0\nabla\ln\rho\otimes(-\kappa_0\nabla\rho))$ in the convection term in Equation $\eqref{system_origin}_2$, which is of order $\kappa_0^2$.
 Finally,  for the system \eqref{system} with an additional convection term $-\kappa_0 u\cdot\nabla^2\rho$ on the left side, they obtained a global-in-time generalized solution for finite-energy initial data with the initial inhomogeneity $\rho_0-1$ in $H^1$.
If in addition the initial velocity $u_0$ belongs to $H^1$, this global solution is unique in dimension two.
 Under a similar smallness hypothesis on $\kappa_0$, P. Secchi \cite{Secchi88} got a unique global classical solution in dimension two for System \eqref{system_origin}.
 There, he also studied the asymptotic behaviour when $\kappa_0$ goes to zero.
 The general case (i.e. $\kappa_0$ can be variable and arbitrarily large) was considered by Beir\~{a}o da Veiga \cite{BV, BV96}.
 See also \cite{B-S-V} for the inviscid case.
 Under exactly the  condition \eqref{cond:coeff} on the physical coefficients, \cite{Sy02, Sy07} gave the existence of global-in-time weak solutions in smooth bounded domains.
Furthermore in dimension two, a recent work \cite{CLS} proved that the weak solutions are in fact unique, under the same initial condition as in Kazhikhov-Smagulov \cite{Kazhikhov-Smagulov} above.
Let us also mention here that for a low Mach number combustion model proposed in \cite{Majda} which, in addition to System \eqref{system_origin}, also takes into account a reaction-diffusion equation for the different species,  P. Embid \cite{Embid87} got  a unique \emph{local-in-time} regular solution.
In Section 8.8 in \cite{PLions96vol2}, P.-L. Lions indicated that in dimension $2$, \emph{small} and smooth enough perturbation will indeed evolve a global weak solution to System \eqref{system_origin}, if the heat conductivity coefficient $k$ is a positive constant.
 Recently in \cite{Danchin-Liao}, R. Danchin and the author studied System \eqref{system_origin} where \emph{variable} positive physical coefficients case was considered.
   There, without assuming any specific coefficient relationship,  a global strong solution was obtained, for  \emph{small} initial perturbation in any dimension $d\geq 2$.

At last, let us just mention some results on the low Mach number limit.
After the pioneer works by D.G. Ebin \cite{Ebin77, Ebin82} and S. Klainerman and A. Majda \cite{Klainerman-M82}, the incompressible limit   has attracted many mathematicians' attention.
Among them, we cite \cite{ Alazard05, Isozaki87, Isozaki89, Metivier-S01, Schochet86, Ukai} for the inviscid case while \cite{Alazard06, BV94,  Danchin02limit, Desjardins-G99, Desjardins-G-L-M99, FN07, Hoff98,  PLions-M98} for Navier-Stokes equations.
See also \cite{BV09} for a good summary.

The aim of this paper is to prove the global-in-time existence of  weak  solutions of Leray's type to  zero Mach number system \eqref{system_origin}   where the two positive variable physical coefficients $k,\mu$ satisfy  Condition \eqref{cond:coeff}, on the whole space $\R^d$, $d\geq 2$.
Furthermore, thanks to the local-in-time existence of a unique strong solution to System \eqref{system_origin} given in \cite{Danchin-Liao}, we can also show that in dimension $2$, for any initial data  of critical regularity,  the weak solutions are in fact   strong solutions  and hence   unique and regular.
This is, to our knowledge, the first result of this kind for  System  \eqref{system_origin}.

\smallbreak

Let's first   analyze System \eqref{system} \emph{formally}. If we assume the initial density $\rho_0(x)$ to be bounded from below and above by two positive constants $\ud\rho$ and  $\ov\rho$ respectively, then  the maximum principle for the  parabolic  equation $(\ref{system})_1$ ensures the density to satisfy the following uniform bound:
\begin{equation}\label{bound:density}
\rho(t,x)\in [\ud\rho,\ov\rho],\quad\forall\, t\geq 0,\, x\in \R^d.
\end{equation}
  Furthermore, if $\rho$ is close to a constant, say ``1'', at infinity, then  we expect the solution $(\rho,u)$  to satisfy the following two energy equalities which come from taking the $L^2(\R^d)$-inner product between $\rho-1$ (resp. $u$) and $(\ref{system})_1$ (resp. $(\ref{system})_2$):
\begin{equation}\label{energy iden:density}
\Int_{\R^d}|\rho(t)-1|^2+2\Int^t_0\Int_{\R^d}\kappa|\nabla\rho|^2
=\Int_{\R^d}|\rho_0-1|^2,\quad \forall\, t>0\,,
\end{equation}
and
\begin{equation}\label{energy iden:velocity}
\Int_{\R^d}\rho(t)|u(t)|^2+4\Int^t_0\Int_{\R^d}\mu|Au|^2=\Int_{\R^d}\rho_0|u_0|^2,\quad \forall\, t>0\,.
\end{equation}


Therefore, we  complement System (\ref{system}) with  initial data $(\rho_0,u_0)$ verifying
\begin{equation}\label{initial}
\rho-1|_{t=0}=\rho_0-1\in L^2(\R^d),\quad u|_{t=0}=u_0\in (L^2(\R^d))^d,\quad 0<\ud\rho\leq \rho_0\leq \ov \rho,\quad \div u_0=0,
\end{equation}
hoping that the obtained solution $(\rho,u)$ satisfies (\ref{energy iden:density}) and \eqref{energy iden:velocity}, at least in inequality form.

Moreover, we assume that under the bounds \eqref{bound:density} imposed on the density, there exist positive constants $\ud k,\ov k,\ud\mu,\ov\mu$ depending only on $\ud\rho,\ov\rho$ such that the physical coefficients  $k$ and  $\mu$ also have positive lower and upper bounds:
\begin{equation}\label{bdd:coeff}
0<\ud k\leq k(t,x)\leq \ov k,\quad 0<\ud\mu\leq \mu(t,x)\leq \ov\mu,\quad\forall\, t\geq 0,\,x\in \R^d.
\end{equation}


To conclude, we have the following global existence result for System \eqref{system}:
\begin{thm}\label{thm:global}
There exists a global-in-time   weak solution   $(\rho,u)$  to   Cauchy problem \eqref{system}-\eqref{initial} in the following sense:
\begin{itemize}
\item $\rho-1\in C([0,+\infty);L^p(\R^d))\cap L^2_\loc([0,+\infty);H^1(\R^d)),\,\forall\, p\in [2,\infty)$.\\
$\rho$ satisfies $\eqref{system}_1$  in $L^2_\loc([0,+\infty);H^{-1}(\R^d))$ and $\rho(0)=\rho_0$.\\
The uniform bound \eqref{bound:density} and \emph{Energy Equality} \eqref{energy iden:density} both hold.

\item For any $t>0$ and any test function $\phi(t,x)\in (C^\infty([0,+\infty)\times \R^d))^d$ with compact support such that $\div \phi=0$, the following holds:
\begin{equation}\label{weak solution:u}
\Int_{\R^d}\rho(t)u(t)\cdot\phi-\Int_{\R^d}\rho_0 u_0\cdot \phi|_{t=0}
-\Int^t_0\Int_{\R^d}\left[ \rho u\cdot\d_\tau \phi
+  (\rho u-\kappa\nabla\rho)\cdot\nabla\phi\cdot u-2\mu Au:A\phi\right]=0.
\end{equation}

\item $u\in C([0,\infty);(L^2_w(\R^d))^d)\cap L^2_\loc([0,+\infty);(H^1(\R^d))^d)$, $u(0)=u_0$ and $\div u=0$ in $L^2_w(\R^+\times \R^d)$ with $L^2_w$ denoting the Lebesgue space $L^2$ endowed with weak topology.\\
There exists a positive constant $C$ depending only on $\ud\rho,\ov\rho$ such that $u$ verifies the energy inequality
\begin{equation}\label{u,energy}
\Int_{\R^d}|u(t)|^2+\Int^t_0\Int_{\R^d}|\nabla u|^2\leq C\Int_{\R^d}|u_0|^2,\quad \forall t>0.
\end{equation}

\end{itemize}
\end{thm}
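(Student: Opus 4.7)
The plan is to follow the classical compactness strategy of Leray and Kazhikhov--Smagulov: construct smooth approximate solutions $(\rho^n, u^n, \nabla\pi^n)$, derive uniform \emph{a priori} estimates, extract compactness, and pass to the limit in the weak formulation \eqref{weak solution:u}. I would first regularize by mollifying the initial data (preserving $\ud\rho\le \rho_0^n\le \ov\rho$ and $\div u_0^n=0$), smoothly cutting the coefficient functions $\mu$ and $\kappa$ outside $[\ud\rho,\ov\rho]$, and applying a Friedrichs spectral truncation (or Galerkin projection) in the momentum equation. Local existence of a smooth solution to the resulting truncated system follows from \cite{Danchin-Liao} applied to the regularized data.

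The uniform \emph{a priori} estimates follow the formal computation already carried out in the paper. Because $\div u^n=0$, the density equation $\eqref{system}_1$ reduces to $\d_t\rho^n + u^n\cdot\nabla\rho^n - \div(\kappa(\rho^n)\nabla\rho^n)=0$, a parabolic equation with divergence-free drift, so the maximum principle yields $\ud\rho\le \rho^n(t,x)\le \ov\rho$ uniformly in $n$, and hence \eqref{bdd:coeff} holds uniformly. Testing against $\rho^n-1$ gives \eqref{energy iden:density}, and testing the momentum equation against $u^n$ (using $\eqref{system}_1$ to handle $\d_t(\rho^n u^n)\cdot u^n$) gives \eqref{energy iden:velocity}. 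A Korn-type inequality on divergence-free fields converts the $\int \mu|Au^n|^2$ dissipation into an $L^2_t H^1_x$ control of $u^n$, giving \eqref{u,energy}. These bounds let the approximate solution be continued globally in time.

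For the passage to the limit, the density equation together with $\rho^n-1\in L^\infty(L^2)\cap L^2_\loc(H^1)$ and $\rho^n u^n - \kappa\nabla\rho^n\in L^2_\loc(L^2)$ implies $\d_t\rho^n\in L^2_\loc(H^{-1})$. By Aubin--Lions, $\rho^n\to\rho$ strongly in $L^2_\loc([0,\infty)\times\R^d)$ and, by interpolation with the $L^\infty$ bound, in every $L^p_\loc$ for $p<\infty$; in particular $\mu(\rho^n)\to\mu(\rho)$ and $\kappa(\rho^n)\to\kappa(\rho)$ strongly. Up to subsequences, $u^n\rightharpoonup u$ weakly in $L^2_\loc(H^1)$ and weakly-$\ast$ in $L^\infty(L^2)$. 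The delicate step is the nonlinear term $\div(\rho^n v^n\otimes u^n)$. Following the Lions--Kazhikhov approach, I would apply Aubin--Lions to the Leray projection $Pm^n$ of the momentum $m^n:=\rho^n u^n$: projecting the momentum equation kills $\nabla\pi^n$, so that one controls $\d_t(Pm^n)$ in some $L^1_\loc(H^{-s})$, which together with $Pm^n\in L^2_\loc(L^2)$ yields strong convergence of $Pm^n$, and hence of $\sqrt{\rho^n}\,u^n$, in $L^2_\loc$. Combined with the strong convergence of $\rho^n$ and the weak convergence of $\nabla u^n, \nabla\rho^n$, this lets one pass to the limit in every term of \eqref{weak solution:u}; the time continuity $u\in C([0,\infty);L^2_w)$ and the recovery of $u(0)=u_0$ are then standard Lions--Magenes consequences.

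The hardest technical point is the negative-norm control of $\d_t(Pm^n)$: the convection term $\rho^n v^n\otimes u^n = \rho^n u^n\otimes u^n - \kappa\nabla\rho^n\otimes u^n$ couples $\nabla\rho^n\in L^2_\loc(L^2)$ with $u^n\in L^\infty(L^2)\cap L^2(H^1)$, whose pointwise product lies only in $L^1$, so one must work in sufficiently negative Sobolev spaces and exploit the local support of the test function to close the estimate. Once strong $L^2_\loc$ compactness of $\sqrt{\rho^n}\,u^n$ is in hand, the remaining verifications---lower semicontinuity giving \eqref{energy iden:density}--\eqref{u,energy}, time continuity, and attainment of the initial data---are routine.
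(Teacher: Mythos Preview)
Your overall strategy---regularize, derive uniform energy bounds, extract compactness via the Leray projection of the momentum, and pass to the limit---matches the paper's. Two points of your sketch, however, differ from the paper in ways that matter.

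\textbf{Regularization.} Your scheme (Friedrichs truncation on the momentum equation, then invoking \cite{Danchin-Liao} for local existence) is both imprecise and risky. First, \cite{Danchin-Liao} treats the \emph{untruncated} system, so it is not the right tool for a spectrally truncated one; and Theorem~\ref{prop:solution-besov} as stated covers only $d=2$. Second, and more importantly, the energy identity \eqref{energy iden:velocity} hinges on the mass and momentum equations sharing the \emph{same} convective flux: in the paper's regularization the transport velocity $\langle u\rangle_\eps$ and the diffusive flux $\langle\kappa\rangle_\eps\nabla\rho$ appear identically in both equations \eqref{system_eps}$_1$ and \eqref{system_eps}$_2$, so that testing \eqref{system_eps}$_2$ against $u$ cancels exactly against \eqref{system_eps}$_1$. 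A Friedrichs cut on the momentum equation alone destroys this cancellation and you would not obtain \eqref{u,energy} uniformly in $n$. The paper instead solves \eqref{system_eps} directly by a fixed-point argument in $C([0,T];H^\infty)$ (Proposition~\ref{prop:regular}), which is self-contained and dimension-independent.

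\textbf{Strong convergence of the density.} You propose Aubin--Lions on $\rho^n$, which yields only \emph{local} $L^2$ compactness and, by lower semicontinuity, only an energy \emph{inequality}. The theorem, however, asserts the energy \emph{equality} \eqref{energy iden:density} and $\rho-1\in C([0,\infty);L^p)$. The paper obtains these by a sharper trick: once the limit $\rho$ is known to solve $\eqref{system}_1$ in $L^2_\loc(H^{-1})$, it satisfies its own energy equality; subtracting it from the approximate one and using $\langle\kappa^{\eps_n}\rangle_{\eps_n}^{1/2}\nabla\rho^{\eps_n}\rightharpoonup\kappa^{1/2}\nabla\rho$ forces $\rho^{\eps_n}\to\rho$ \emph{strongly} in $L^\infty(L^2)$. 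Your Aubin--Lions route can still reach the same conclusion, but you would need to add this post-limit energy-equality step explicitly.

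The remainder of your argument---control of $\partial_t(\mathcal P(\rho^n u^n))$ in a negative space, strong $L^2_\loc$ convergence of $\sqrt{\rho^n}\,u^n$, and the passage to the limit in the nonlinear terms---is exactly what the paper does, following \cite{PLions96}.
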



In the above, it is easy to check (by view of Condition \eqref{cond:reverse}) that  $\rho$ and $v:=u-\kappa\nabla\ln\rho\in L^2([0,+\infty);(L^2(\R^d))^d)$ satisfy System \eqref{system_origin}-\eqref{cond:coeff}  in distribution sense.
But  owing to a lack of high regularity assumption on the initial density $\rho_0$, we  don't know the continuity of $v$ at the initial instant, since we can't even define the quantity $\kappa(\rho_0)\nabla\ln\rho_0$.

Instead, if we assume  $\rho_0-1\in H^1(\R^d)$ in addition to  the initial condition \eqref{initial}, that is, the initial original velocity field $v_0$ belongs to $(L^2(\R^d))^d$ too,  then we expect that there exists  a global weak solution $(\rho-1,v)\in H^1(\R^d)\times (L^2(\R^d))^d$ to System \eqref{system_origin}-\eqref{cond:coeff}, if $d=2$ or $3$.
In fact, if $\kappa\equiv 1$, then taking $L^2(\R^d)$-inner product between Equation $\eqref{system}_1$ and $\Delta\rho$ yields
\begin{equation}\label{EE:rho-H1}
\frac12\frac{d}{dt}\|\nabla\rho\|_{ L^2(\R^d) }^2+\|\Delta\rho\|_{ L^2(\R^d) }^2
\leq \left |\Int_{\R^d}\div(\rho u)\Delta\rho \right|\,.
\end{equation}
Since a priori we have the  estimate (noticing $\div u=0$)
$$
\left|\Int_{\R^d}\div(\rho u)\Delta\rho \right|
=\left|\Int_{\R^d}\nabla\rho\cdot\nabla u\cdot\nabla\rho \right|
\leq \|\nabla u\|_{L^2(\R^d)}\|\nabla\rho\|_{L^4(\R^d)}^2,
$$
and the following two interpolation inequalities
\begin{equation}\label{ineq:rho-L4}
\|\nabla\rho\|_{L^4(\R^2)}^2\lesssim
       \|\Delta\rho\|_{L^2(\R^2)}\|\nabla\rho\|_{L^2(\R^2)},\quad
\|\nabla\rho\|_{L^4(\R^3)}^2\lesssim
       \|\Delta\rho\|_{L^2(\R^3)}\| \rho\|_{L^\infty(\R^3)},
\end{equation}
we have from  Young's Inequality and Estimate \eqref{u,energy} the following two energy estimates:
\begin{align}\label{EE:rho-H1,3D}
& \|\nabla\rho\|_{L^\infty_t( L^2(\R^2) )}+\|\Delta\rho\|_{L^2_t( L^2(\R^2)) }
\leq C\|\nabla\rho_0\|_{L^2}\,e^{C\int^t_0 \|\nabla u\|_{L^2}^2 }
\leq C \|\nabla\rho_0\|_{L^2(\R^2)}\,e^{C\|u_0\|_{L^2(\R^2)}^2}\,,\nonumber\\
& \|\nabla\rho\|_{L^\infty_t( L^2(\R^3) )}+\|\Delta\rho\|_{L^2_t( L^2(\R^3)) }
\leq  C(\|\nabla\rho_0\|_{L^2}+\|\nabla u\|_{L^2_t(L^2)})
\leq  C \|(\nabla\rho_0,u_0)\|_{L^2(\R^3)} \,,
\end{align}
for some constant $C$ depending also on $\ud\rho, \ov\rho.$
In general case where $\kappa$ depends on $\rho$,  we consider, instead,  the equation for the scalar function $K=K(\rho)$ with $\nabla K=\kappa\nabla\rho$ and $K(1)=0$, see Section \ref{sec:H1} in the following for more details.

But it is not clear that we can still have  $v \in L^\infty_t((L^2(\R^d))^d)$, in dimension $d\geq 4$, due to a lack of an interpolation inequality like \eqref{ineq:rho-L4} which can be  used to control   the convection term $u\cdot\nabla\rho$ in the equation of the density.
Anyway, we have
\begin{thm}\label{thm:global,v}
Let $d=2, 3$ and Relation \eqref{cond:coeff} hold.
 For any initial data $(\rho_0,v_0)$ such that
\begin{equation}\label{initial,v}
\rho_0-1\in H^1(\R^d),\quad v_0\in (L^2(\R^d))^d,
\quad 0<\ud\rho\leq \rho_0\leq \ov\rho,
\quad \div(v_0 +\kappa(\rho_0)\nabla\ln\rho_0)=0,
\end{equation}
there exists a global-in-time weak solution   $(\rho,v)$ to  Cauchy problem \eqref{system_origin}-\eqref{initial,v}  in the sense given in Theorem \ref{thm:global}, except with Equality \eqref{weak solution:u} replaced by
\begin{equation}\label{weak solution:v}
\Int_{\R^d}\rho(t)v(t)\cdot\phi-\Int_{\R^d}\rho_0 v_0\cdot \phi|_{t=0}
-\Int^t_0\Int_{\R^d}\left[ \rho v\cdot\d_\tau \phi
+   \rho v \cdot\nabla\phi\cdot v- \sigma:D\phi\right]=0.
\end{equation}
Furthermore, $(\rho,v)$ satisfies
 \begin{align}\label{sol:H1}
 &(\rho-1,v)\in C\Bigl([0,+\infty);H^1(\R^2)\times (L^2(\R^2))^2\Bigr),\nonumber\\
 &\rho-1\in C\Bigl([0,+\infty);H^s(\R^3)\Bigr),\,\forall\, s<1,\quad \hbox{and}\quad
 v\in C\Bigl([0,+\infty);(L^2_w(\R^3))^3\Bigl).
 \end{align}
and the following energy estimate:
\begin{equation}\label{rho,v,energy}
\|(\rho-1,v)\|_{L^\infty_t([0,\infty);H^1\times L^2 )}
+\|(\nabla\rho,\nabla v)\|_{ L^2_t([0,\infty);H^1\times  L^2 )}
\leq C(\ud\rho,\ov\rho,\|(\rho_0-1,v_0)\|_{H^1\times  L^2 }).
\end{equation}
\end{thm}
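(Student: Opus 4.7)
The plan is to re-run the construction underlying Theorem \ref{thm:global} while exploiting the additional assumption $\rho_0-1\in H^1(\R^d)$ to propagate $H^1$-regularity of the density along the approximation, and then to translate the resulting estimates for $(\rho,u)$ into the $(\rho,v)$-formulation through $v=u-\kappa(\rho)\nabla\ln\rho$. Concretely, I would mollify $(\rho_0,v_0)$ so as to preserve the two-sided bounds on $\rho_0$ and the $H^1\times L^2$-size, build smooth local-in-time solutions via the local well-posedness result of \cite{Danchin-Liao}, and derive uniform bounds that survive passage to the global limit.

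\textbf{Propagation of $H^1$-regularity of the density.} This is the decisive step and is precisely what is sketched in \eqref{EE:rho-H1}--\eqref{EE:rho-H1,3D} for the constant-$\kappa$ model problem. For the general case I would introduce the scalar primitive $K=K(\rho)$ with $K'(\rho)=\kappa(\rho)$ and $K(1)=0$, so that $\nabla K=\kappa\nabla\rho$ and Equation $\eqref{system}_1$ rewrites, using $\div u=0$, as a linear transport-diffusion equation for $K$ with a lower-order quadratic source in $\nabla\rho$. Testing this against $-\Delta K$ and using the maximum principle \eqref{bound:density} (which controls $\kappa$ and $\kappa'$ pointwise) reduces matters to estimating $\|\nabla u\|_{L^2}\|\nabla\rho\|_{L^4}^2$ together with a lower-order cubic term; the Gagliardo--Nirenberg inequalities \eqref{ineq:rho-L4} then close the estimate, yielding
\[
\|\nabla\rho\|_{L^\infty_t L^2(\R^d)}+\|\nabla^2\rho\|_{L^2_t L^2(\R^d)}\leq C\bigl(\ud\rho,\ov\rho,\|\nabla\rho_0\|_{L^2},\|u_0\|_{L^2}\bigr),\qquad d=2,3,
\]
uniformly in the approximation. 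In 2D this comes out by Gronwall together with the energy bound \eqref{u,energy}, while in 3D the upper bound $\rho\le\ov\rho$ entering the second inequality of \eqref{ineq:rho-L4} produces a linear-in-time, non-exploding bound. This step is also the main obstacle and the reason for the restriction $d\le 3$: in higher dimension the interpolation \eqref{ineq:rho-L4} fails and the convection term $u\cdot\nabla\rho$ can no longer be absorbed by the parabolic smoothing of the density equation.

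\textbf{Passage to the limit, weak formulation for $v$, and continuity.} The uniform bounds together with Aubin--Lions yield strong convergence of the density in $C_\loc([0,\infty);L^2_\loc)$ and weak-$\star$ convergence in $L^\infty_t H^1$, which is enough to close all nonlinear terms as in Theorem \ref{thm:global} and to obtain \eqref{rho,v,energy}. Setting $v:=u-\kappa(\rho)\nabla\ln\rho$ gives $v\in L^\infty_t L^2$, since $\rho\ge\ud\rho>0$ and $\nabla\rho\in L^\infty_t L^2$. To arrive at \eqref{weak solution:v}, I would substitute $\rho u=\rho v+\kappa\nabla\rho$ and $u=v+\kappa\nabla\ln\rho$ into \eqref{weak solution:u}, use the divergence-free character of $\phi$ together with Equation $\eqref{system}_1$ in $L^2_t H^{-1}$ to handle the time derivative of $\kappa\nabla\rho$, and reassemble the viscous piece into $\sigma:D\phi$ through the algebraic identities displayed between \eqref{system_origin} and \eqref{system} in the Introduction (noting in particular that $Au=Av$ because $\kappa\nabla\ln\rho$ is the gradient of a scalar function of $\rho$). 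For \eqref{sol:H1}, in 2D the bound $\rho-1\in L^\infty_t H^1\cap L^2_t H^2$ combined with $\d_t\rho\in L^2_t L^2$ (read off from $\eqref{system}_1$) yields $\rho-1\in C_t H^1$ by a standard parabolic argument, and $v\in C_t L^2$ then follows from an energy equality for $\frac12\int\rho|v|^2$; in 3D only $\d_t\rho\in L^2_t H^{-1}$ is directly available, hence the weaker $C_t H^s$ statement for $s<1$ via interpolation with $L^\infty_t H^1$, and the weak $L^2$-continuity of $v$ is inherited from that of $u$.
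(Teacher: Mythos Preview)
Your proposal captures the decisive analytical idea exactly as the paper does: pass to the primitive $K$ with $K'=\kappa$, test against $-\Delta K$, and close via the dimension-dependent interpolation \eqref{ineq:rho-L4}. (A small correction: since $\kappa\,\div(\kappa\nabla\rho)=\kappa\Delta K$, the $K$-equation is simply $\partial_tK+u\cdot\nabla K-\kappa\Delta K=0$ with no quadratic source.) Where you diverge from the paper is in the \emph{approximation scheme}. You mollify the data and invoke local strong solutions from \cite{Danchin-Liao}; the paper, by contrast, in 2D works directly with the global weak solution already produced by Theorem~\ref{thm:global} and upgrades its density regularity a posteriori (Lemma~\ref{lem:est,L2}), estimating the convection term without integration by parts as $\|u\|_{L^4}\|\nabla K\|_{L^4}\|\Delta K\|_{L^2}$ and closing via $u\in L^4_tL^4_x$. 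In 3D, where $u\notin L^4_tL^4_x$, the paper introduces a \emph{two-level} regularisation \eqref{system_eps_delta}: letting $\delta\to0$ first yields global weak solutions $(\rho^\eps,u^\eps)$ to \eqref{system_eps:v} in which the transport velocity remains the smooth $\langle u^\eps\rangle_\eps$, so the integration by parts
\[
\int \langle u^\eps\rangle_\eps\cdot\nabla K^\eps\,\Delta K^\eps = -\int \nabla K^\eps\cdot\nabla\langle u^\eps\rangle_\eps\cdot\nabla K^\eps
\]
needed to reach your $\|\nabla u\|_{L^2}\|\nabla K\|_{L^4}^2$ is automatic, and global existence of the approximants is inherited for free from Section~\ref{sec:regular}. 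Your route would also work, but you must separately argue that your smooth local solutions persist globally in time --- the $H^1\times L^2$ bounds you derive are not by themselves a continuation criterion in the Besov framework of \cite{Danchin-Liao} --- and that they are regular enough to justify the above integration by parts; both points are fixable but are precisely what the paper's regularised-system construction is designed to sidestep.
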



Next we want to show the global solutions got above are actually unique for some reasonably smooth initial data when $d=2$.
We notice that even if $\rho-1\in L^\infty(H^1)$, it is difficult to show the uniqueness.
 In fact, if we consider the system for the difference of any two solutions, then the nonlinear terms in System (\ref{system_origin}) ask the $L^\infty(L^\infty)$-norm control on the difference of  two densities.
 It is unknown a priori because the difference does not (at least not obviously) satisfy any parabolic equation  and another unlucky thing is that we can't embed  $H^1(\R^2)$ into $L^\infty(\R^2)$.
  Therefore, we have to resort to  Besov functional space $B^1_{2,1}(\R^2)$ (see Definition \ref{def:Besov} below)  which can be embedded both in $H^1$ and in $L^\infty$ in dimension $2$ and furthermore, we have already  the following existence result, as a special case of Theorem 1.2 in \cite{Danchin-Liao}:
\begin{thm}\label{prop:solution-besov}
In dimension $2$, for any  initial density $\rho_0$ and velocity field $u_0$ which satisfy
\begin{equation}\label{cond:2d,initial}\begin{array}{c}
0<\ud\rho\leq  \rho_0\leq \ov\rho,\quad\div u_0=0\quad
\hbox{and}\quad \|\rho_0-1\|_{{B}^{1}_{2,1}}+\|u_0\|_{{B}^{0}_{2,1}}\leq M,
\end{array}\end{equation}
for  some positive constants $\ud\rho, \ov\rho, M,$
  there exists a positive time $T_c$ depending only on $\ud\rho$, $\ov\rho$, $M$   such that System \eqref{system} has a unique solution
$(\rho,u,\nabla \pi)$ satisfying
\begin{equation}\label{strong solution}
\rho\in[\ud\rho,\ov\rho],\quad \|(\rho-1,u,\nabla\pi)\|_{E_{T_c}}\leq CM,\,
 \end{equation}
where $C$ is some positive constant   and  the solution space $E_T$ is the following critical nonhomogeneous Besov spaces
 $$
 E_{T}\triangleq \Bigl(C([0,T];B^1_{2,1})\cap L^1_{T}(B^3_{2,1})\Bigr)
                           \times \Bigl(C([0,T];B^0_{2,1})\cap L^1_{T}(B^2_{2,1})\Bigr)^2
                           \times \Bigl(L^1_{T}(B^0_{2,1})\Bigr)^2.
                           $$
\end{thm}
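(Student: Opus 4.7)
The statement is explicitly the two-dimensional, nonhomogeneous specialization of Theorem 1.2 of \cite{Danchin-Liao}, so my proposal is to outline the Friedrichs-type iteration argument that reproduces it in this setting. The functional framework is forced by scaling: in $\R^2$, $B^1_{2,1}$ is scaling-critical for $\rho-1$ and $B^0_{2,1}$ for $u$, and the two essential features $B^1_{2,1}(\R^2) \hookrightarrow L^\infty$ and the fact that $B^1_{2,1}(\R^2)$ is an algebra are both indispensable — the first for keeping $\rho$ bounded away from $0$ and $\infty$ along the scheme, the second for handling the products and the compositions $\kappa(\rho), \mu(\rho)$ in the nonlinearities.

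I would set $(\rho^0, u^0, \nabla \pi^0) \equiv (1, 0, 0)$ and, given $(\rho^n, u^n)$, define $(\rho^{n+1}, u^{n+1}, \nabla \pi^{n+1})$ as the solution of two decoupled linear problems: a transport-diffusion equation $\d_t \rho^{n+1} + \div(\rho^{n+1} v^n) - \div(\kappa(\rho^n)\nabla \rho^{n+1}) = 0$ for the density, with $v^n := u^n - \kappa(\rho^n) \nabla \ln \rho^n$, and a variable-coefficient Stokes system $\d_t(\rho^n u^{n+1}) + \div(\rho^n v^n \otimes u^{n+1}) - \div(2 \mu(\rho^n) A u^{n+1}) + \nabla \pi^{n+1} = 0$, $\div u^{n+1} = 0$, for the velocity, both with the prescribed initial data. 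The maximum principle on the density equation preserves $\ud\rho \leq \rho^{n+1} \leq \ov\rho$ provided $v^n \in L^1_T(L^\infty)$, which is automatic in this framework. Localizing each linear problem by a Littlewood-Paley block $\dq$ and exploiting parabolic smoothing together with the Danchin-type commutator estimates on variable-coefficient parabolic equations yields, after weighted summation,
\[
\|(\rho^{n+1}-1, u^{n+1}, \nabla \pi^{n+1})\|_{E_T}
   \leq C\,\|(\rho_0-1, u_0)\|_{B^1_{2,1} \times B^0_{2,1}}
      + T^{1/2}\, F\bigl(\|(\rho^n-1, u^n, \nabla \pi^n)\|_{E_T}\bigr),
\]
for some superlinear $F$ depending on $\ud\rho, \ov\rho$. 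Composition estimates $\|\kappa(\rho)-\kappa(1)\|_{B^1_{2,1}} \lesssim \|\rho-1\|_{B^1_{2,1}}$ (and similarly for $\mu$), combined with the algebra property, close all the quadratic terms in $F$, and choosing $T_c = T_c(\ud\rho, \ov\rho, M)$ small enough yields the uniform bound $\|(\rho^n-1, u^n, \nabla \pi^n)\|_{E_{T_c}} \leq CM$.

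Convergence I would establish by estimating the differences $(\rho^{n+1}-\rho^n, u^{n+1}-u^n)$ in the one-notch-below space $C([0,T_c]; B^0_{2,1}) \times C([0,T_c]; B^{-1}_{2,1})$, in which the map is contracting on a short subinterval; the limit then lies in $E_{T_c}$ and solves \eqref{system}, and uniqueness follows from the same difference estimate applied to two hypothetical solutions with the same data. The principal obstacle throughout is the coupling created by the change of unknown $u = v + \kappa \nabla \ln \rho$: the transport velocity $v^n$ in the density equation depends on $\nabla \rho^n$, so the regularity of $\rho$ has to be strictly stronger than what $H^1$ provides — strong enough to embed into $L^\infty$ and to sit in an algebra — and in $\R^2$ this uniquely pins down $B^1_{2,1}$. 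Without these two features, the quadratic terms of the form $\kappa(\rho)\nabla\rho \otimes \nabla\rho$ that appear once one rewrites the system in the variables $(\rho, u)$ cannot be controlled on any time interval of positive length, and the scheme fails to close.
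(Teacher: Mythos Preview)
The paper does not prove this theorem: it is stated verbatim as ``a special case of Theorem 1.2 in \cite{Danchin-Liao}'' and used as a black box, with only the linear parabolic estimate (Proposition~\ref{prop:rho}) and the product estimate (Proposition~\ref{prop:product}) quoted from that reference. So there is no in-paper proof to compare your attempt against; you have correctly identified the source, and your iteration sketch is in the spirit of the argument in \cite{Danchin-Liao}.

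One small point on your outline: in the linearized density equation you write $\d_t\rho^{n+1}+\div(\rho^{n+1}v^n)-\div(\kappa(\rho^n)\nabla\rho^{n+1})=0$ with $v^n=u^n-\kappa(\rho^n)\nabla\ln\rho^n$. Since $v^n$ is not divergence-free, this equation carries a zeroth-order term $\rho^{n+1}\div v^n$, and the maximum principle you invoke to preserve $\ud\rho\leq\rho^{n+1}\leq\ov\rho$ does not follow directly. The cleaner linearization, and the one actually matching System~\eqref{system}, is $\d_t\rho^{n+1}+u^n\cdot\nabla\rho^{n+1}-\div(\kappa(\rho^n)\nabla\rho^{n+1})=0$ with the divergence-free $u^n$, for which the maximum principle is immediate and Proposition~\ref{prop:rho} applies verbatim. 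With that correction your scheme closes as you describe.
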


 Hence, System \eqref{system}-\eqref{cond:2d,initial} admits a unique local strong solution  $(\rho,u,\nabla\pi)$  on its lifespan $[0,T^\ast)$, $T^\ast> T_c$, with $(\rho-1,u,\nabla\pi)\in E_{t}$ for any $t<T^\ast$.
 Moreover, there exists a positive time $T_0<T^\ast$ such that  $(\rho-1,u)|_{t=T_0}\in B^3_{2,1}\times (B^2_{2,1})^2\subset H^2\times (H^1)^2$.
Just as in \cite{Danchin04}, we therefore  consider an extra pseudo-conservation law concerning $L^\infty(H^2)\times (L^\infty(H^1))^2$-norm of the  weak solutions which evolve from the initial moment $T_0$.
 This  law  ensures that the  global weak solutions  belong to the (strong) solution space $E_t$ for all   $t\geq T_0$, by virtue of the embedding $H^2\times (H^1)^2\subset B^1_{2,1}\times (B^0_{2,1})^2$, see Lemma \ref{lem:global} for more details.
Thanks to the uniqueness of the strong solutions on the time interval $[0,T^\ast)$, the Cauchy problem \eqref{system}-\eqref{cond:2d,initial}   has a unique  global strong solution $(\rho,u,\nabla\pi)$ with $(\rho-1,u,\nabla\pi)$ in $E_T$ for all $T\in (0,+\infty)$.
 By use of  Equation $\eqref{system}_1$ and the following estimates in Besov spaces (see Section \ref{sec:Besov}):
$$\|f(\rho)-f(1)\|_{B^s_{2,1}}\leq C(\ud\rho,\ov\rho) \|\rho-1\|_{B^s_{2,1}},\,
\forall f\in C^1,\, s>0,
\quad
\|gh\|_{B^a_{2,1}}\leq C\|g\|_{B^1_{2,1}} \|h\|_{B^a_{2,1}},\hbox{ if }a=0,\,1,
$$
 it is easy to find that $\d_t\rho \in L^1([0,+\infty);B^1_{2,1}(\R^2))$ and furthermore, by view of the calculation from System \eqref{system_origin} to System \eqref{system},  we have
\begin{thm}\label{thm:2d}
Let $d=2$ and Relation  \eqref{cond:coeff} hold.
For any initial data $(\rho_0,v_0)$ satisfying
 \begin{equation}\label{cond:2d,initial,v}
0<\ud\rho\leq  \rho_0\leq \ov\rho,
\quad \div(v_0+\kappa_0\nabla\ln\rho_0)=0,\quad
\hbox{and}\quad\rho_0-1\in {B}^{1}_{2,1}(\R^2),
\quad  v_0\in ({B}^{0}_{2,1}(\R^2))^2,
 \end{equation}
 System \eqref{system_origin} has a unique  global strong solution $(\rho,v,\nabla\Pi)$ with $(\rho-1,v,\nabla\Pi)$ in Space $E_T$ for any $T\in (0,+\infty)$.
\end{thm}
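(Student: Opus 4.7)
The plan is to combine three ingredients, all essentially signalled in the excerpt: the local-in-time strong existence of Theorem~\ref{prop:solution-besov}, the global weak existence of Theorem~\ref{thm:global,v}, and the propagation of higher regularity provided by Lemma~\ref{lem:global}; weak--strong uniqueness then glues everything into a unique global strong solution, after which a change of unknowns returns us to the original triplet $(\rho,v,\nabla\Pi)$.

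First I would rephrase the initial data in terms of the divergence-free velocity $u_0:=v_0+\kappa(\rho_0)\nabla\ln\rho_0$. Using Bernstein's inequality together with the Besov product and composition estimates quoted just before the statement, $\kappa(\rho_0)\nabla\ln\rho_0\in B^0_{2,1}(\R^2)$ whenever $\rho_0-1\in B^1_{2,1}(\R^2)$ and $\ud\rho\le\rho_0\le\ov\rho$, so $u_0\in B^0_{2,1}(\R^2)$ with $\div u_0=0$ and $\|\rho_0-1\|_{B^1_{2,1}}+\|u_0\|_{B^0_{2,1}}\le M$. Theorem~\ref{prop:solution-besov} then produces a unique maximal strong solution $(\rho,u,\nabla\pi)$ of \eqref{system}--\eqref{cond:2d,initial} on $[0,T^\ast)$ with $T^\ast>T_c$ and $(\rho-1,u,\nabla\pi)\in E_t$ for every $t<T^\ast$.

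Next, the built-in parabolic smoothing of $E_t$ upgrades the regularity at any strictly positive time: one can pick $T_0\in(0,T^\ast)$ such that $(\rho-1,u)(T_0)\in B^3_{2,1}\times (B^2_{2,1})^2\hookrightarrow H^2\times (H^1)^2$. Restarting Theorem~\ref{thm:global,v} from $T_0$ yields a global weak solution, which coincides with the strong one on $[T_0,T^\ast)$ by weak--strong uniqueness. The crucial analytic step is then Lemma~\ref{lem:global}, a two-dimensional ``pseudo-conservation law'' at the $H^2\times H^1$ level that propagates this extra regularity for all positive times. By the 2D embedding $H^2\times (H^1)^2\hookrightarrow B^1_{2,1}\times (B^0_{2,1})^2$, the state at every subsequent time is admissible initial data for Theorem~\ref{prop:solution-besov}, and the uniform $H^2\times H^1$ bound provides a uniform lower bound on the local existence time. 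This prevents blow-up and forces $T^\ast=+\infty$, with $(\rho-1,u,\nabla\pi)\in E_T$ for every $T<\infty$; uniqueness on each $[0,T]$ follows by applying the uniqueness part of Theorem~\ref{prop:solution-besov} on overlapping short intervals.

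Finally, I would transfer the result to \eqref{system_origin} through the inverse change of unknowns $v=u-\kappa(\rho)\nabla\ln\rho$ and $\Pi=\pi+\kappa\,\d_t\rho+\div(2\mu v)+\nu\div v$. Equation $\eqref{system}_1$ gives $\d_t\rho\in L^1_{\rm loc}([0,\infty);B^1_{2,1}(\R^2))$ at once via the composition and product estimates cited in the excerpt, and the same estimates control every other term on the right-hand side; hence $(\rho-1,v,\nabla\Pi)\in E_T$ for every $T<\infty$ and uniqueness is inherited from the $u$-formulation. The hardest step is clearly Lemma~\ref{lem:global}: in two dimensions the coupling between the parabolic density equation and the density-dependent momentum equation is just at the borderline of integrability, and closing the $H^2\times H^1$ estimate globally must rely crucially on the coefficient relation \eqref{cond:coeff}, which kills the otherwise singular source term in the $u$-equation, together with logarithmic 2D interpolation \`a la Danchin~\cite{Danchin04} to avoid a Gronwall bound that would explode in finite time.
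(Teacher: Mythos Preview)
Your plan is correct and mirrors the paper's argument step for step: reformulate in the $u$-variables, invoke local existence, exploit parabolic smoothing at a positive time $T_0$, restart a global weak solution from $H^2\times H^1$ data, show via Lemma~\ref{lem:global} that this weak solution actually lies in $E_T$, and conclude $T^\ast=\infty$ by uniqueness before converting back. Two small corrections: the $H^2\times H^1$ pseudo-conservation law is Lemma~\ref{lem:2dest,H1}, while Lemma~\ref{lem:global} is the subsequent step that feeds those Sobolev bounds into Proposition~\ref{prop:rho} to bootstrap the weak solution into $E_T$; and that bootstrap closes by ordinary Besov interpolation and Gronwall, with no logarithmic refinement \`a la~\cite{Danchin04} required.
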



\begin{rem}
Let us make some remarks here:
\begin{itemize}
\item If  $(\rho,v,\nabla\Pi)$  is a solution of (\ref{system_origin}),  then
so does
\begin{equation}\label{eq:scaling1}
(\rho(\ell^2t,\ell x),\ell v(\ell^2t,\ell x),\ell^3\nabla \Pi(\ell^2t,\ell x))\quad\hbox{for all }\ \ell>0.
\end{equation}
Therefore,   the initial data $(\rho_0,v_0)$ given in \eqref{cond:2d,initial,v} is of critical regularity in the sense that  $\dot B^1_{2,1}\times \dot B^0_{2,1}$-norm is invariant
by the transform
\begin{equation}\label{eq:scaling2}
(\rho_0,v_0)(x)\rightarrow (\rho_0(\ell x),\ell v_0(\ell x))\quad\hbox{for all }\ \ell>0.
\end{equation}

\item Theorem \ref{thm:2d} implies that in dimension two, for any initial datum $\rho_0\geq \ud\rho>0$ such that $\rho_0-1\in B^1_{2,1}$, there exists a unique global-in-time solution to the quasilinear heat equation $\d_t\rho-\div(\kappa(\rho)\nabla\rho)=0$ in functional space $C_b([0,+\infty);B^1_{2,1})\cap L^1([0,+\infty);B^3_{2,1})$.

\end{itemize}
\end{rem}

\subsection{Besov spaces and some notations}\label{sec:Besov}

For completeness, let's  define Besov spaces here.
 Fix a  nonincreasing radial function $\chi$ in functional space $C^\infty_0(\R^d)$, $d\geq 2$, such that it is supported on the ball $B(0,\frac43)$ and equals to $1$ in a neighborhood of the unit ball $B(0,1)$.
   Define a sequence of functions $\{\chi_j\}_{j\geq -1}$ in light of $\chi$ as
   $$
   \chi_{-1}=\chi\quad\textrm{ and }\quad
   \chi_j(\xi)=\chi(\xi/{2^{j+1}})-\chi(\xi/{2^j})\,\textrm{ if }\, j\geq 0.
   $$
For all $j\geq -1$, denote the inverse Fourier transformation of $\chi_j$  by $h_j$.
Therefore we can define  the \emph{dyadic blocks} $(\dj)_{j\geq -1}$ as
\begin{align*}
&\dj u=\chi_j(D)u=\Int_{\R^d}h_j(y)u(x-y)\,dy \quad\hbox{  if  }\quad j\geq -1.
\end{align*}
Hence we can now define the Besov space $B^s_{p,r}$:
\begin{defi}\label{def:Besov}
For $s\in \R$, $(p,r)\in [1,\infty]^2$ and $u\in \mathcal {S}'(\R^d)$, we define
$$
B^s_{p,r}(\R^d):=\left\{u\in\mathcal{S}'(\R^d),\,\|u\|_{B^s_{p,r}}
:=\left\|\left(2^{js}\|\dj u\|_{L^p(\R^d)}\right)_{j\geq -1}\right\|_{\ell^r}<\infty\right\}.
$$
\end{defi}

We have some basic properties of Besov spaces:
 \begin{prop} \label{prop:basic}
 Let $s\in \R$, $(p,r)\in [1,\infty]^2$ and  $f$ be a continuous function from $\R$ to $\R$ such that $f(0)=0$.  Then there exists a positive constant $C$ such that
 $$\|\nabla u\|_{B^{s-1}_{p,r}(\R^d)}\leq C\|u\|_{B^{s}_{p,r}(\R^d)},$$
 and (here $C$ depends also on $\|u\|_{L^\infty}$)
 \begin{equation}\label{action}
\|f\circ u\|_{B^s_{p,r}(\R^d)}\leq C\|u\|_{B^s_{p,r}(\R^d)},\quad\textrm{if}\quad s>0.
\end{equation}
 \end{prop}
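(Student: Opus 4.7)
The first estimate is a direct application of Bernstein's inequality. For $j\geq 0$ the Fourier support of $\dj u$ is contained in the annulus $\{2^j\lesssim|\xi|\lesssim 2^{j+1}\}$, hence $\|\nabla\dj u\|_{L^p}\leq C\,2^j\|\dj u\|_{L^p}$; for $j=-1$ the support lies in a ball and $\|\nabla\dj u\|_{L^p}\leq C\|\dj u\|_{L^p}$. Since the Fourier multipliers $\dj$ and $\nabla$ commute, $\dj(\nabla u)=\nabla\dj u$, so
$$
2^{j(s-1)}\|\dj(\nabla u)\|_{L^p}\leq C\cdot 2^{js}\|\dj u\|_{L^p},\qquad j\geq -1,
$$
and taking the $\ell^r$-norm in $j$ yields the claim (the low-frequency term $j=-1$ is absorbed into the constant).

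For the composition estimate \eqref{action}, the plan is to combine a telescoping identity with Bony's paraproduct calculus. Let $S_j=\sum_{k<j}\dk$ denote the low-frequency cutoff. Using $f(0)=0$ one writes
$$
f(u)=\sum_{j\geq -1}\bigl(f(S_{j+1}u)-f(S_j u)\bigr),
$$
and by the mean value theorem — applied implicitly under the assumption that $f$ is at least $C^1_\loc$, which is the natural reading of the hypothesis together with $\|u\|_{L^\infty}<\infty$ — each increment equals $m_j\dj u$ with $|m_j|\leq\|f'\|_{L^\infty(I)}$, where $I=[-\|u\|_{L^\infty},\|u\|_{L^\infty}]$. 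Applying $\Delta_{j'}$ and exploiting the spectral localization of the product $m_j\dj u$ (its Fourier transform being supported in a ball of radius $\sim 2^{j+N_0}$ for some fixed $N_0$), only the indices $j\geq j'-N_0$ contribute, giving
$$
\|\Delta_{j'}f(u)\|_{L^p}\leq C\|f'\|_{L^\infty(I)}\!\!\sum_{j\geq j'-N_0}\|\dj u\|_{L^p}.
$$
Multiplying by $2^{j's}$ and summing in $\ell^r(j')$ via the discrete convolution inequality — which is where the hypothesis $s>0$ enters, since the geometric tail $\sum_{j\geq j'-N_0}2^{(j'-j)s}$ must be summable — produces $\|f(u)\|_{B^s_{p,r}}\leq C\|u\|_{B^s_{p,r}}$.

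The main obstacle is not analytic but rather a matter of bookkeeping the regularity of $f$: the statement as written requires only continuity, but the telescoping argument needs $f\in C^1_\loc$ when $s\in(0,1]$, and for $s>1$ one must iterate the paralinearization $f(u)=T_{f'(u)}u+R(u)$ and absorb the smoother remainder $R(u)$, which demands $f\in C^{[s]+1}_\loc$. This is also the origin of the dependence of the constant $C$ on $\|u\|_{L^\infty}$, through $\|f^{(k)}\|_{L^\infty(I)}$. Once the level of smoothness is fixed, the rest is a routine application of paradifferential calculus; the low-frequency block $j=-1$ is handled separately via the embedding of the spectrally localized functions into all $L^p$.
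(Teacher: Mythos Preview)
The paper does not supply a proof of this proposition; it is stated as a basic fact about Besov spaces, with the surrounding propositions referred to the monograph \cite{B-C-D}. Your argument is the standard one and is correct: Bernstein's inequality for the derivative bound, and Meyer's telescoping identity $f(u)=\sum_j\bigl(f(S_{j+1}u)-f(S_ju)\bigr)$ together with the mean-value theorem and the $\ell^1$-convolution trick (which is exactly where $s>0$ is used) for the composition estimate. This is precisely the proof one finds in the literature (e.g.\ Bahouri--Chemin--Danchin, Theorem~2.87 and Lemma~2.1).

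Your caveat about the regularity of $f$ is well taken: the hypothesis ``$f$ continuous'' in the statement is too weak for \eqref{action} to hold as written --- one needs at least $f\in C^{[s]+1}_\loc$, and the constant then depends on $\sup_{|t|\leq\|u\|_{L^\infty}}|f^{(k)}(t)|$ for $k\leq[s]+1$. This is a slip in the statement rather than in your proof; in the paper the estimate is only ever applied to smooth functions of the density (the coefficients $\kappa,\mu$, etc.), so the issue is cosmetic.
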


Since  we  will generally work in Sobolev spaces, we state some embedding  results (see \cite{B-C-D}):
\begin{prop}\label{prop:embed}
The following imbeddings hold true:
\begin{enumerate}[(i)]
\item $B^{s_1}_{p_1,r_1}(\R^d)\hookrightarrow B^{s_2}_{p_2,r_2}(\R^d)$ whenever $s_1-\frac{d}{p_1}> s_2-\frac{d}{p_2}$, $p_1\leq p_2$ or when $s_1=s_2$, $p_1=p_2$ and $r_1\leq r_2$.
\item The classical Sobolev space $H^s(\R^d)$ can be represented by $B^s_{2,2}(\R^d)$ and  $H^{s+1(\R^d)}\hookrightarrow B^s_{2,1}(\R^d)$, $B^s_{2,1}(\R^d)\hookrightarrow H^s(\R^d)$.
\item In dimension $d$, spaces of the form $B^{\frac dp}_{p,1}(\R^d)$ with any $p\in [1,\infty]$  can be imbedded in  $L^\infty(\R^d)$.
\end{enumerate}
\end{prop}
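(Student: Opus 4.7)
The plan is to derive all three embeddings from two standard Littlewood--Paley tools: Bernstein's inequality, which controls $L^q$-norms of frequency-localized functions in terms of $L^p$-norms with $p\le q$, and Plancherel's identity applied to the dyadic blocks $\Delta_j u$. Throughout, I use the convention that the constants implicit in the inequalities depend only on $d$ and on the Littlewood--Paley partition of unity fixed in the excerpt, via the fact that $\Delta_j u$ has Fourier support in an annulus of size $\sim 2^j$ (or, for $j=-1$, in a fixed ball).

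For item (i), fix $u\in B^{s_1}_{p_1,r_1}$. Since each $\Delta_j u$ is Fourier-supported in a ball of radius $\lesssim 2^j$, Bernstein gives $\|\Delta_j u\|_{L^{p_2}}\lesssim 2^{jd(1/p_1-1/p_2)}\|\Delta_j u\|_{L^{p_1}}$ for $p_1\le p_2$. Multiplying by $2^{js_2}$ yields
\[
2^{js_2}\|\Delta_j u\|_{L^{p_2}}\lesssim 2^{-j\delta}\bigl(2^{js_1}\|\Delta_j u\|_{L^{p_1}}\bigr),\qquad \delta:=(s_1-\tfrac{d}{p_1})-(s_2-\tfrac{d}{p_2})>0.
\]
Taking the $\ell^{r_2}$ norm in $j$ and applying Hölder in $j$ (to trade $\ell^{r_1}$ against $\ell^{r_2}$ using the summable factor $2^{-j\delta}$) produces $\|u\|_{B^{s_2}_{p_2,r_2}}\lesssim \|u\|_{B^{s_1}_{p_1,r_1}}$, regardless of the relation between $r_1$ and $r_2$; the strict inequality $\delta>0$ is precisely what absorbs any loss. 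The second case, $s_1=s_2$, $p_1=p_2$, $r_1\le r_2$, is just the monotonicity $\ell^{r_1}\hookrightarrow\ell^{r_2}$ applied termwise to the sequence $(2^{js}\|\Delta_j u\|_{L^{p}})_j$.

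For item (ii), Plancherel applied to each $\Delta_j u$ together with the almost-orthogonality of the frequency supports gives $\|u\|_{H^s}^2 \sim \sum_{j\ge -1} 2^{2js}\|\Delta_j u\|_{L^2}^2$, which is exactly $\|u\|_{B^s_{2,2}}^2$; hence $H^s=B^s_{2,2}$ with equivalent norms. The inclusion $B^s_{2,1}\hookrightarrow B^s_{2,2}=H^s$ then follows from $\ell^1\hookrightarrow\ell^2$, and the inclusion $H^{s+1}=B^{s+1}_{2,2}\hookrightarrow B^s_{2,1}$ is obtained by Cauchy--Schwarz:
\[
\sum_{j\ge -1} 2^{js}\|\Delta_j u\|_{L^2}=\sum_{j\ge -1} 2^{-j}\cdot 2^{j(s+1)}\|\Delta_j u\|_{L^2}\le \Bigl(\sum_{j\ge -1} 2^{-2j}\Bigr)^{1/2}\|u\|_{H^{s+1}},
\]
the geometric sum being finite because $j\ge -1$.

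For item (iii), I write $u=\sum_{j\ge -1}\Delta_j u$ in $\mathcal{S}'$ and apply Bernstein with target $L^\infty$: for $p\in[1,\infty]$,
\[
\|\Delta_j u\|_{L^\infty}\lesssim 2^{jd/p}\|\Delta_j u\|_{L^p}.
\]
Summing over $j$ yields $\sum_j \|\Delta_j u\|_{L^\infty}\lesssim \|u\|_{B^{d/p}_{p,1}}$, which forces the series to converge absolutely in $L^\infty$ and provides the bound $\|u\|_{L^\infty}\lesssim \|u\|_{B^{d/p}_{p,1}}$; the critical index $d/p$ is exactly what makes the Bernstein factor cancel the dimensional weight, so the $\ell^1$ summability in the definition of $B^{d/p}_{p,1}$ supplies exactly what is needed. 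None of the three items presents a real obstacle: the only point demanding care is item (i), where one must remember that the strict inequality $\delta>0$ is what allows arbitrary $r_1,r_2$, and this is the step I would write out most carefully.
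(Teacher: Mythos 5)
Your proof is correct and is exactly the standard Bernstein--Plancherel argument for these classical embeddings; the paper itself offers no proof here, simply quoting them from the reference \cite{B-C-D}, where the same reasoning appears. The one step worth spelling out in item (i) is the passage from the pointwise bound $2^{js_2}\|\Delta_j u\|_{L^{p_2}}\lesssim 2^{-j\delta}\,2^{js_1}\|\Delta_j u\|_{L^{p_1}}$ to the $\ell^{r_2}$ bound for arbitrary $r_1,r_2$, which you can do most cleanly via $\ell^{r_1}\hookrightarrow\ell^{\infty}$ combined with the summability of $(2^{-j\delta})_{j\ge -1}$ in $\ell^{r_2}$; your "H\"older in $j$" accomplishes the same thing.
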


We recall here an apriori estimate for the following linear parabolic equation  in Besov spaces in dimension $2$ (see Proposition 4.1  in \cite{Danchin-Liao}\footnote{The a priori estimates can be extended to any dimension $d\geq 2$ and more general Besov spaces.}):
\begin{prop}\label{prop:rho}
Let $s\in (-1,1]$. Let $a(t,x)\in \cS([0,T]\times \R^2)$ satisfy
\begin{equation}\left\{\begin{array}{ccc}
\d_t a+u\cdot\nabla a-\div(\kappa\nabla a)&=&f, \\
a|_{t=0}&=&a_0,
\end{array}\right.\end{equation}
where $u=u(t,x)\in \R^2$, $\kappa=\kappa(t,x)\in \R^+$ are known and   $\kappa\geq \ud\kappa>0$. Then there exists a constant $C $ depending on $\ud\kappa$ and $s$ such that for any $t\in (0,T]$,
\begin{equation}\label{linearest:rho}
\begin{array}{c}
\|a\|_{L^\infty_t(B^{s}_{2,1}(\R^2))\cap L^1_t(B^{s+2}_{2,1}(\R^2))}
\qquad\qquad\qquad\qquad\qquad\qquad\qquad\qquad\qquad\qquad\qquad\qquad  \\
\leq \Bigl(\|a_0\|_{B^{s}_{2,1}(\R^2)}+ C \|\Delta_{-1}a\|_{L^1_t(L^{2}(\R^2))}
+\|f\|_{L^1_t(B^{s}_{2,1}(\R^2))}\Bigr)\\
\times \exp\Bigl\{C \Bigl( \|\nabla u\|_{L^1_t(B^1_{2,1})}
                 +\|\nabla \kappa\|_{L^2_t(B^{1}_{2,1}(\R^2))}^2\Bigr)\Bigr\}.
\end{array}
\end{equation}
\end{prop}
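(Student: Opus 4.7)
I would establish this variable-coefficient parabolic bound by the standard Littlewood--Paley/commutator strategy. Apply the dyadic block $\dq$ to the equation to obtain, for every $q \geq -1$,
\[
\partial_t \dq a + u\cdot\nabla\dq a - \div(\kappa\nabla\dq a)
= \dq f + \mathcal{C}^{(1)}_q + \mathcal{C}^{(2)}_q,
\]
where the transport and diffusion commutators
\[
\mathcal{C}^{(1)}_q := u\cdot\nabla\dq a - \dq(u\cdot\nabla a),\qquad
\mathcal{C}^{(2)}_q := \dq\,\div(\kappa\nabla a) - \div(\kappa\nabla\dq a)
\]
measure how $\dq$ fails to commute with the transport and variable-diffusion operators. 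Testing against $\dq a$ in $L^2(\R^2)$, using that the drift term gives $\tfrac12\int(\div u)|\dq a|^2$ and applying the decomposition $\dq(\kappa\nabla a) = \kappa\nabla\dq a + [\dq,\kappa]\nabla a$ in the integration by parts of the diffusion, I obtain the energy identity
\[
\tfrac12\tfrac{d}{dt}\|\dq a\|_{L^2}^2 + \int\kappa|\nabla\dq a|^2\,dx
= \tfrac12\!\int(\div u)|\dq a|^2 + \!\int\mathcal{C}^{(1)}_q\,\dq a - \!\int [\dq,\kappa]\nabla a\cdot\nabla\dq a + \!\int \dq f\,\dq a.
\]
Using $\kappa\geq\ud\kappa$, Cauchy--Schwarz, and Bernstein's inequality $\|\nabla\dq a\|_{L^2}\simeq 2^q\|\dq a\|_{L^2}$ (valid for $q\geq 0$), I bound the $\kappa$-commutator contribution by $2^q\|[\dq,\kappa]\nabla a\|_{L^2}\|\dq a\|_{L^2}$ \emph{without} absorbing $\|\nabla\dq a\|_{L^2}$ on the left --- a deliberate choice that keeps the inequality linear in $\|\dq a\|_{L^2}$ rather than quadratic. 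Dividing by $\|\dq a\|_{L^2}$ yields, for $q\geq 0$,
\[
\tfrac{d}{dt}\|\dq a\|_{L^2} + c\,\ud\kappa\,2^{2q}\|\dq a\|_{L^2}
\lesssim \|\dq f\|_{L^2} + \|\mathcal{C}^{(1)}_q\|_{L^2} + 2^q\|[\dq,\kappa]\nabla a\|_{L^2} + \|\nabla u\|_{L^\infty}\|\dq a\|_{L^2}.
\]
The mode $q=-1$ enjoys no diffusion gain from Bernstein; its ODE is driven by $\|\Delta_{-1}a\|_{L^2}$ itself, which is exactly why the extra term $\|\Delta_{-1}a\|_{L^1_t(L^2)}$ appears on the right-hand side of \eqref{linearest:rho}.

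The heart of the proof is in the commutator bounds, proved via Bony paraproduct/remainder decomposition and the classical estimates of \cite{B-C-D}: for $s\in(-1,1]$ in dimension two,
\[
\sum_{q\geq -1}\!2^{qs}\|\mathcal{C}^{(1)}_q\|_{L^2}
\lesssim \|\nabla u\|_{B^1_{2,1}}\|a\|_{B^s_{2,1}},\qquad
\sum_{q\geq -1}\!2^{q(s+1)}\|[\dq,\kappa]\nabla a\|_{L^2}
\lesssim \|\nabla\kappa\|_{B^1_{2,1}}\|a\|_{B^{s+1}_{2,1}}.
\]
The range $s\in(-1,1]$ is dictated exactly by these two estimates: the lower bound is needed to sum the low--high paraproduct, the upper bound to control the high--high remainder with the $(2,1)$ index pair.

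Multiplying the mode-by-mode inequality by $2^{qs}$, summing over $q$, and integrating in time produces
\[
\|a\|_{L^\infty_t(B^s_{2,1})} + c\,\ud\kappa\,\|a\|_{L^1_t(B^{s+2}_{2,1})}
\lesssim \|a_0\|_{B^s_{2,1}} + \|f\|_{L^1_t(B^s_{2,1})} + C\|\Delta_{-1}a\|_{L^1_t(L^2)} + \int_0^t\!\bigl(\|\nabla u\|_{B^1_{2,1}}\|a\|_{B^s_{2,1}} + \|\nabla\kappa\|_{B^1_{2,1}}\|a\|_{B^{s+1}_{2,1}}\bigr)d\tau.
\]
The main technical obstacle is the borderline term $\|\nabla\kappa\|_{B^1_{2,1}}\|a\|_{B^{s+1}_{2,1}}$: the factor $\|a\|_{B^{s+1}_{2,1}}$ lies strictly between the two left-hand norms, and a naive Gronwall would fail. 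I resolve it through the interpolation $\|a\|_{B^{s+1}_{2,1}}\leq \|a\|_{B^s_{2,1}}^{1/2}\|a\|_{B^{s+2}_{2,1}}^{1/2}$ followed by Young's inequality,
\[
\|\nabla\kappa\|_{B^1_{2,1}}\|a\|_{B^{s+1}_{2,1}}
\leq \tfrac{c\,\ud\kappa}{2}\|a\|_{B^{s+2}_{2,1}} + C\,\ud\kappa^{-1}\,\|\nabla\kappa\|_{B^1_{2,1}}^{2}\|a\|_{B^s_{2,1}},
\]
absorbing the first piece into the $L^1_t(B^{s+2}_{2,1})$-norm on the left. Gronwall's lemma applied to $t\mapsto\|a(t)\|_{B^s_{2,1}}$ against the integrand $\|\nabla u\|_{B^1_{2,1}} + C\ud\kappa^{-1}\|\nabla\kappa\|_{B^1_{2,1}}^{2}$ then produces exactly the exponential factor $\exp\bigl\{C\bigl(\|\nabla u\|_{L^1_t(B^1_{2,1})} + \|\nabla\kappa\|_{L^2_t(B^1_{2,1})}^{2}\bigr)\bigr\}$ claimed in \eqref{linearest:rho}, since $\int_0^t\|\nabla\kappa\|_{B^1_{2,1}}^{2}\,d\tau = \|\nabla\kappa\|_{L^2_t(B^1_{2,1})}^{2}$.
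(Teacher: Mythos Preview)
The paper does not give its own proof of this proposition; it is quoted verbatim as Proposition~4.1 of \cite{Danchin-Liao}. Your sketch reproduces precisely the standard argument used there (and in Chapter~3 of \cite{B-C-D}): localise with $\dq$, extract the parabolic gain $c\,\ud\kappa\,2^{2q}$ via Bernstein for $q\geq 0$, treat $q=-1$ separately (whence the extra $\|\Delta_{-1}a\|_{L^1_t(L^2)}$), control the transport and diffusion commutators by Bony paraproduct estimates in the range $s\in(-1,1]$, interpolate $\|a\|_{B^{s+1}_{2,1}}$ between $B^{s}_{2,1}$ and $B^{s+2}_{2,1}$ and absorb via Young, then close by Gronwall to obtain the claimed exponential factor. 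The approach is correct and coincides with that of the cited reference.
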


We introduce   also an estimation for products in Besov spaces  in dimension $2$, which is needed in dealing with nonlinear terms:
\begin{prop}\label{prop:product}
Let $s\in(-1,1]$. Then there exists some positive constant $C$  such that
\begin{equation}\label{product}
\|ab\|_{B^s_{2,1}(\R^2)}\leq C\|a\|_{B^1_{2,1}(\R^2)}\|b\|_{B^s_{2,1}(\R^2)}.
\end{equation}
\end{prop}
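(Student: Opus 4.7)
The plan is to apply Bony's paraproduct decomposition
$$ab = T_a b + T_b a + R(a,b), \qquad T_f g := \sum_j S_{j-1}f\,\Delta_j g,\qquad R(f,g) := \sum_j \Delta_j f\,\tilde\Delta_j g$$
with $\tilde\Delta_j := \Delta_{j-1}+\Delta_j+\Delta_{j+1}$, and to bound the three pieces in $B^s_{2,1}(\R^2)$ separately. The crucial inputs are the critical embedding $B^1_{2,1}(\R^2) \hookrightarrow L^\infty(\R^2)$ from Proposition \ref{prop:embed}(iii) and Bernstein's inequality in dimension two.

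For the first paraproduct, spectral localization reduces $\Delta_k T_a b$ to a finite sum of terms $\Delta_k(S_{j-1}a\,\Delta_j b)$ with $|j-k|\leq N_0$; Hölder and the embedding give $\|\Delta_k T_a b\|_{L^2} \leq C\|a\|_{L^\infty}\sum_{|j-k|\leq N_0}\|\Delta_j b\|_{L^2}$, and multiplying by $2^{ks}$ and summing over $k$ yields $\|T_a b\|_{B^s_{2,1}} \leq C\|a\|_{B^1_{2,1}}\|b\|_{B^s_{2,1}}$ for every $s\in\R$. For the second paraproduct $T_b a$, where $b$ sits at the lower regularity, I would first use the two-dimensional Bernstein bound $\|\Delta_{k'}b\|_{L^\infty}\leq C\,2^{k'}\|\Delta_{k'}b\|_{L^2}$ to get, for any $s\leq 1$,
$$\|S_{j-1}b\|_{L^\infty} \leq C \sum_{k'\leq j-2} 2^{k'(1-s)}\bigl(2^{k's}\|\Delta_{k'}b\|_{L^2}\bigr)\leq C\,2^{j(1-s)}\|b\|_{B^s_{2,1}},$$
where the geometric factor $2^{k'(1-s)}$ with exponent $1-s\geq 0$ is what permits the summation (the endpoint $s=1$ just gives the embedding bound directly). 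Combining this with the same spectral reorganization as for $T_a b$ produces $\|T_b a\|_{B^s_{2,1}} \leq C\|a\|_{B^1_{2,1}}\|b\|_{B^s_{2,1}}$.

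The main obstacle is the remainder. Because each summand $\Delta_j a\,\tilde\Delta_j b$ is spectrally supported in a ball rather than an annulus, a single block $\Delta_k R(a,b)$ picks up all indices $j\geq k-N_0$ simultaneously, and a direct $L^\infty\times L^2$ estimate in $B^s_{2,1}$ costs a full factor $2^j$ that cannot be absorbed once $s\leq 0$. I would circumvent this by first estimating $R(a,b)$ in the larger space $B^{1+s}_{1,1}(\R^2)$: Cauchy--Schwarz gives $\|\Delta_j a\,\tilde\Delta_j b\|_{L^1}\leq \|\Delta_j a\|_{L^2}\|\tilde\Delta_j b\|_{L^2}$, and exchanging the order of summation leaves the geometric tail
$$\sum_{k=-1}^{j+N_0} 2^{k(1+s)} \leq C\,2^{j(1+s)},$$
which converges precisely because $1+s>0$, so that
$$\|R(a,b)\|_{B^{1+s}_{1,1}} \leq C \sum_j 2^{j(1+s)} \|\Delta_j a\|_{L^2}\|\tilde\Delta_j b\|_{L^2} \leq C\|a\|_{B^1_{2,1}}\|b\|_{B^s_{2,1}}.$$
The Bernstein inequality in dimension two, in the form $\|\Delta_k f\|_{L^2}\leq C\,2^k\|\Delta_k f\|_{L^1}$, then transfers this to $\|R(a,b)\|_{B^s_{2,1}} \leq C\|R(a,b)\|_{B^{1+s}_{1,1}}$ and the proposition follows. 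The hypothesis $s>-1$ is used at exactly one step (to make $1+s>0$ in the geometric tail of the remainder), while $s\leq 1$ is what makes the estimate on $T_b a$ go through.
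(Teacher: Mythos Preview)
Your argument is correct and follows essentially the same route as the paper: Bony's decomposition, the two paraproducts handled via the embedding $B^1_{2,1}(\R^2)\hookrightarrow L^\infty$ together with Bernstein (with $s\le 1$ needed for $T_b a$), and the remainder estimated first in $B^{s+1}_{1,1}$ (using $s>-1$) before transferring to $B^s_{2,1}$ by the two-dimensional Bernstein embedding. The only cosmetic difference is that the paper phrases the $T_b a$ bound as a discrete convolution giving $\|b\|_{B^{s-1}_{\infty,1}}$, whereas you bound $\|S_{j-1}b\|_{L^\infty}$ directly; these are equivalent.
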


Let's give the proof for the reader's convenience. Firstly, let's focus on the paraproduct
$$T_ab:=\sum_{j\geq -1} \Bigl(\sum_{j'\leq j-2} \Delta_{j'}a \Bigr) \dj b.$$
Since the Fourier transform of $(\sum_{j'\leq j-2} \Delta_{j'}a \, \dj b)$ is supported near the annulus of size $2^j$ centered at the origin, it is easy to see that for any $s\in \R$,
$$\|T_ab\|_{B^s_{2,1}}
\leq C\sum_{j} 2^{js} \Bigl\|\sum_{j'\leq j-2} \Delta_{j'}a\Bigr\|_{L^\infty} \|\dj b\|_{L^2}
\leq C\|a\|_{L^\infty}\|b\|_{B^s_{2,1}}
\leq C\|a\|_{B^1_{2,1}} \|b\|_{B^s_{2,1}}.$$
Similarly, if $s=1$, then the above also holds for $T_b a$. Otherwise, if $s<1$, then we can calculate the paraproduct $T_b a$ as following:
$$\displaylines{
\|T_b a\|_{B^s_{2,1}(\R^2)}
\leq C\sum_{j} 2^{js}  \Bigl(\sum_{j'\leq j-2} \|\Delta_{j'}b\|_{L^\infty}\Bigr) \|\dj a\|_{L^2}
\hfill\cr\hfill
\leq C\sum_{j} \Bigl(\sum_{j'\leq j-2} 2^{(j-j')(s-1)} 2^{j'(s-1)}\|\Delta_{j'}b\|_{L^\infty}\Bigr)
 2^j\|\dj a\|_{L^2}
 \hfill\cr\hfill
\leq C\|b\|_{B^{s-1}_{\infty,1}(\R^2)} \|a\|_{B^1_{2,1}(\R^2)}
\leq C\|a\|_{B^1_{2,1}(\R^2)} \|b\|_{B^s_{2,1}(\R^2)}.
}$$
At last, we consider the remainder
$$
R(a,b):=\sum_q\Bigl(\Delta_q a\sum_{q-1\leq q'\leq q+1} \Delta_{q'}b \Bigr).
$$
Since the Fourier transform of $(\Delta_q a\sum_{q-1\leq q'\leq q+1} \Delta_{q'}b)$ is supported near the ball of size $2^q$ centered at the origin, one easily finds that if $s>-1$, then
$$\displaylines{
\|R(a,b)\|_{B^s_{2,1}(\R^2)}
\leq \|R(a,b)\|_{B^{s+1}_{1,1}(\R^2)}
\leq C\sum_{j} 2^{j(s+1)}   \sum_{q\geq j-2} \|\Delta_q a\|_{L^2}
\sum_{q-1\leq q'\leq q+1} \|\Delta_{q'}b \|_{L^2}
\hfill\cr\hfill
\leq C\sum_{j}\sum_{q\geq j-2}  2^{j(s+1)} \Bigl(2^{-q}\|a\|_{B^1_{2,1}}\Bigr)
\Bigl(2^{-qs}\|b\|_{B^s_{2,1}} \Bigr)
\leq C\|a\|_{B^1_{2,1}} \|b\|_{B^s_{2,1}}.
}$$
This completes the proof of Proposition \ref{prop:product}.


\subsection*{Notations}
 Let us fix   some notations which will be used  throughout in the sequel:
\begin{itemize}
\item We always take $\varrho=\rho-1$ in any environment, that is, $\varrho_0=\rho_0-1$, $\varrho^{\eps}=\rho^\eps-1$, etc.

\item We will always view the physical coefficients as functions of the density $\rho$ and denote for example, $\kappa'\triangleq \frac{d\kappa}{d\rho}$, $\kappa^\eps\triangleq \kappa(\rho^\eps)$, etc.

\item $C$ denotes some harmless constant which may depend on the lower bound $\ud\rho$ and the upper bound $\ov\rho$ of the initial density  (see (\ref{initial})).  In some places, we shall alternately use  the notation $A\lesssim B$ instead of $A\leq CB$.

\item Functions of the form $\langle f\rangle_\eps$ will always be viewed as the regularized functions of $f$, in the sense specified in Section \ref{sec:regular} (see (\ref{regular})).

\item If $X,Y$ are two Banach spaces, then the notation $X\hookrightarrow Y$ means that space $X$  can be imbedded into space $Y$ continuously while $X\hookrightarrow\hookrightarrow Y$ says that the embedding from $X$ to $Y$ is furthermore compact.

\item We write $u_n\rightarrow u$ in some Banach space $X$ to represent the strong convergence of the sequence $\{u_n\}_n$ to $u$ in space $X$ such that $\|u_n-u\|_X\rightarrow 0$, while $u_n\rightharpoonup u$  and $u_n\mathop{\rightharpoonup}\limits^\ast u$ in $X$ mean that $\{u_n\}_n$ converges to $u$ in the associated weak and weak-$\ast$ topology of space $X$ respectively.

\item The index $p'$ denotes the conjugate of $p$ such that $\frac{1}{p}+\frac{1}{p'}=1$.

\item If $X$ is a Banach space, $T>0$ and $p\in[1,+\infty]$, then
$L_T^p(X)$ stands for the set of Lebesgue measurable functions $f$ from
$[0,T)$ to $X$ such that $t\mapsto \|f(t)\|_{X}$ belongs to $L^p([0,T)).$
If $T=+\infty,$ then the space is merely denoted by $L^p(X).$
Finally, if $I$ is some interval of $\R$ then the notation $\cC(I;X)$ stands
for the set of continuous functions from $I$ to $X.$

\item $L^2_w(X)$ denotes the Lebesgue space $L^2(X)$  endowed with weak topology.

\item We shall keep the same notation  $X$ to designate vector-fields with components in $X.$
\end{itemize}


The rest of the paper unfolds as follows. The next section is devoted to proving Theorem \ref{thm:global} and Theorem \ref{thm:global,v} whereas the proof of Theorem \ref{thm:2d} is left in the third section.

\section{Global existence of the weak solution}
In this section we will prove the global-in-time existence of weak solutions, i.e. Theorem \ref{thm:global} and Theorem \ref{thm:global,v}.
The first    paragraph is devoted to the case when the initial density $\rho_0$ satisfies $\rho_0-1\in L^2(\R^d)$    and the second, is  to the case where $\rho_0-1\in H^1(\R^2)$ or $H^1(\R^3)$.

\subsection{The case with the density of lower regularity}
 In this subsection we will prove Theorem \ref{thm:global} in two steps. The first step, i.e. Section \ref{sec:regular}, is to solve the regularized system of the Cauchy problem \eqref{system}-\eqref{initial} while the second subsection, is devoted to show that the convergent limit of the obtained regular solution sequence is indeed a weak solution of this Cauchy problem.

\subsubsection{Regularized system}\label{sec:regular}

In this subsection we will consider the regularized system of \eqref{system}-\eqref{initial}. More precisely, let us fix a nonnegative function $\varphi\in C^\infty_0(\R^d)$ such that
$$\textrm{Supp } \varphi\in B(0,1),\quad 0\leq\varphi\leq 1,\quad \int_{\R^d}\varphi=1,$$
and consequently define a sequence of functions $\{\varphi_\eps\}_\eps$ such that $\varphi_\eps(x)=\eps^{-d}\varphi(\eps^{-1}x),\, \forall x\in \R^d$. Given any $f\in \cD'(\R^d)$, we set the regularized functions $\{\langle f\rangle_\eps\}_\eps$ as
\begin{equation}\label{regular}
\langle f\rangle_\eps\triangleq \varphi_\eps\ast f.
\end{equation}
 Now we regularize the Cauchy problem as following
 \footnote{We notice here that we don't have to regularize the coefficient $\mu$ since the density $\rho$ as a solution of the regularized equation $\eqref{system_eps}_1$ is already smooth, whereas we remain the regularized form of the coefficient $\kappa$ to keep uniform with $\eqref{system_eps}_1$. This implies the uniform enregy bound for $u$.}
\begin{equation}\label{system_eps}\left\{\begin{array}{ccc}
\d_t\rho+\div(\rho\,\langle u \rangle_\eps)-\div(\langle\kappa\rangle_\eps\nabla\rho)&=&0,\\
\d_t(\rho u)+\div\Bigl((\rho \langle u \rangle_\eps-\langle\kappa\rangle_\eps\nabla\rho)\otimes u\Bigr)-\div(2 \mu   Au)+\nabla\pi&=&0,\\
\div u&=&0,\\
\rho|_{t=0}&=&\langle\rho_0\rangle_\eps,\\
 u|_{t=0}&=&\langle u_0\rangle_\eps.
\end{array}\right.\end{equation}
It is easy to see that if initial data $(\rho_0,u_0)$ satisfies \eqref{initial}, then we have the following properties (keep in mind that $\varrho_0=\rho_0-1$):
$$\langle\varrho_0\rangle_\eps,\langle u_0\rangle_\eps\in H^\infty,\quad \langle\varrho_0\rangle_\eps\rightarrow \varrho_0\textrm{ in }L^2,\quad \langle u_0\rangle_\eps\rightarrow u_0\textrm{ in }L^2,
\quad 0<\ud\rho\leq  \langle\rho_0\rangle_\eps\leq \ov\rho,\quad \div\langle u_0\rangle_\eps=0.$$


In the following  we will use fixed point method to solve System (\ref{system_eps}). More precisely, for any $T\in (0,\infty)$ fixed, we will show that the operator $F$ from some known functions $(\tilde \rho,\tilde u)$ to the solution $(\rho,u)$ of the system (with $\tilde\kappa=\kappa(\tilde\rho)$)
\begin{equation}\label{operator}\left\{\begin{array}{ccc}
\d_t\rho+\div(\rho\,\langle\tilde u\rangle_\eps)-\div( \langle\tilde\kappa\rangle_\eps\nabla\rho)&=&0,\\
\d_t(\rho u)+\div\Bigl((\rho \langle\tilde u\rangle_\eps-\langle\tilde\kappa\rangle_\eps\nabla\rho)\otimes u\Bigr)-\div(2 \mu(\rho)  Au)+\nabla\pi&=&0,\\
\div u&=&0,\\
\rho|_{t=0}&=&\langle\rho_0\rangle_\eps,\\
 u|_{t=0}&=&\langle u_0\rangle_\eps,
\end{array}\right.\end{equation}
is compact in the Banach space
\begin{align}\label{Banach}
E_{R_0,T}=\Bigl\{(\rho,u)\,|\,&(\rho-1,u)\in C([0,T];L^2)\cap L^2(0,T;H^1),\quad \div u=0\textrm{ on }\R^d\times [0,T],\notag\\
& 0<\ud\rho\leq \rho\leq \ov \rho,\quad  \|(\rho-1,u)\|_{L^\infty(0,T;L^2)},\|(\nabla\rho,\nabla u)\|_{L^2(0,T;L^2)}\leq R_0\Bigr\},
\end{align}
with $R_0$ depending only on the initial data, to be determined later. Let us first notice that although System \eqref{operator} is nonlinear, after getting the  solution $\rho$ to  the linear system  $\eqref{operator}_1-\eqref{operator}_4$, the equations $\eqref{operator}_2-\eqref{operator}_3$ for the velocity $u$ and the pressure $\pi$ become linear immediately. Besides, thanks to the regularization, we will show  that the solution $(\rho,u)$ to System \eqref{operator} belongs to a much more regular solution space than $E_{R_0,T}$, which provides $F$ with compacity.


Now we state the well-posedness result for System \eqref{system_eps}:
\begin{prop}\label{prop:regular}
For any positive time $T\in (0,+\infty)$, there exists a unique  smooth solution $(\rho,u)\in E_{R_0,T}$ to System \eqref{system_eps} such that $(\rho-1,u)\in C([0,T];H^\infty)$.
\end{prop}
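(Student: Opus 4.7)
The plan is to realize $(\rho,u)$ as a fixed point of the operator $F:(\tilde\rho,\tilde u)\mapsto(\rho,u)$ defined via the linear auxiliary system \eqref{operator}, by means of Schauder's fixed point theorem applied to the closed convex bounded set $E_{R_0,T}$ for a suitably chosen $R_0$. For any $(\tilde\rho,\tilde u)\in E_{R_0,T}$, both $\langle\tilde u\rangle_\eps$ and $\langle\tilde\kappa\rangle_\eps=\langle\kappa(\tilde\rho)\rangle_\eps$ are $C^\infty$ in the space variable with every spatial derivative bounded in $L^\infty_T L^\infty$ by a quantity depending only on $R_0$ and powers of $\eps^{-1}$; moreover $\div\langle\tilde u\rangle_\eps=\langle\div\tilde u\rangle_\eps=0$ and $\langle\tilde\kappa\rangle_\eps\geq\ud\kappa:=\min_{[\ud\rho,\ov\rho]}\kappa(\cdot)>0$. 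Hence $\eqref{operator}_1$ is a uniformly parabolic linear equation with smooth coefficients and smooth initial datum $\langle\rho_0\rangle_\eps$, with a unique solution $\rho\in C([0,T];H^\infty)$ satisfying $\ud\rho\leq\rho\leq\ov\rho$ by the maximum principle (as $\ud\rho\leq\langle\rho_0\rangle_\eps\leq\ov\rho$). With $\rho$ in hand, the momentum system $\eqref{operator}_2$--$\eqref{operator}_3$ is a linear non-stationary Stokes problem with smooth, bounded coefficients and $\rho$ bounded below, hence solvable for $(u,\nabla\pi)$ with $u\in C([0,T];H^\infty)\cap L^2_T H^\infty$ and $\div u=0$.

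To check that $F$ leaves $E_{R_0,T}$ invariant, one applies the two natural energy identities. Testing $\eqref{operator}_1$ against $\rho-1$, the convection contribution integrates to zero since $\div\langle\tilde u\rangle_\eps=0$, yielding
\[\|\rho-1\|_{L^\infty_T L^2}^2+2\ud\kappa\,\|\nabla\rho\|_{L^2_T L^2}^2\leq\|\rho_0-1\|_{L^2}^2.\]
Testing $\eqref{operator}_2$ against $u$ and using $\div u=0$ to eliminate $\nabla\pi$, the identity
\[\int\d_t(\rho u)\cdot u=\tfrac12\tfrac{d}{dt}\!\int\rho|u|^2+\tfrac12\int\d_t\rho\,|u|^2\]
combines with
\[\int\div\bigl((\rho\langle\tilde u\rangle_\eps-\langle\tilde\kappa\rangle_\eps\nabla\rho)\otimes u\bigr)\cdot u=\tfrac12\int\div(\rho\langle\tilde u\rangle_\eps-\langle\tilde\kappa\rangle_\eps\nabla\rho)\,|u|^2=-\tfrac12\int\d_t\rho\,|u|^2,\]
the last equality coming from $\eqref{operator}_1$, so the two $\d_t\rho\,|u|^2$ terms cancel exactly. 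Together with the identity $\|\nabla u\|_{L^2}^2=2\|Au\|_{L^2}^2$ (valid for divergence-free fields, by integration by parts) and the bounds \eqref{bdd:coeff}, we deduce
\[\ud\rho\,\|u\|_{L^\infty_T L^2}^2+2\ud\mu\,\|\nabla u\|_{L^2_T L^2}^2\leq\ov\rho\,\|u_0\|_{L^2}^2.\]
Choosing $R_0$ sufficiently large in terms of $\ud\rho,\ov\rho,\ud\kappa,\ud\mu$, and $\|(\rho_0-1,u_0)\|_{L^2}$ secures $F(E_{R_0,T})\subset E_{R_0,T}$.

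Compactness of $F$ is obtained by a bootstrap argument: since the mollified coefficients are bounded in $C^k$ for every $k$ in terms of $R_0$ and $\eps$, standard parabolic and Stokes regularity theory give bounds on $(\rho-1,u)$ in $L^2_T H^k\cap H^1_T H^{k-2}$ for any $k\geq 2$, so by the Aubin--Lions lemma $F(E_{R_0,T})$ is relatively compact in $C([0,T];L^2)\cap L^2_T H^1$. Continuity of $F$ on $E_{R_0,T}$ follows from analogous linear energy estimates applied to the difference of two preimages, with the $\langle\cdot\rangle_\eps$-smoothing providing the needed $L^\infty$ control. Schauder's theorem then yields a fixed point, which solves \eqref{system_eps}. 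Uniqueness within $E_{R_0,T}$ is obtained from an energy estimate on the difference of two solutions, closed by Gronwall's lemma, and a final bootstrap places the solution in $C([0,T];H^\infty)$. The only delicate point is the exact cancellation underpinning the energy identity for $u$, which hinges on the particular regularization chosen in \eqref{system_eps} --- preservation of $\langle\cdot\rangle_\eps$ on $\kappa$ in $\eqref{system_eps}_1$ together with the matching combination $(\rho\langle u\rangle_\eps-\langle\kappa\rangle_\eps\nabla\rho)$ in $\eqref{system_eps}_2$ --- as the footnote after \eqref{system_eps} emphasizes.
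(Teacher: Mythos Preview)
Your approach is essentially the same as the paper's: both prove Proposition~\ref{prop:regular} by applying a Schauder-type fixed point argument to the map $F$ defined by the linear auxiliary system~\eqref{operator}, with invariance of $E_{R_0,T}$ coming from the two energy identities (the density $L^2$ estimate and the cancellation of the $\partial_t\rho\,|u|^2$ terms in the momentum balance), and compactness of $F$ from higher $H^m$ bounds afforded by the mollified coefficients. The only difference is one of presentation: where you invoke ``standard parabolic theory'' and ``standard non-stationary Stokes theory'' as black boxes, the paper constructs the solutions to each linear subproblem explicitly via Friedrich's spectral truncation (projectors $P_n$ onto frequencies $|\xi|\le n$), derives uniform $H^m$ bounds on the approximants, and passes to the limit. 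In particular, for the velocity step the paper does not quote a Stokes result but instead solves for $(u_n,\nabla\pi_n)$ with $\pi_n$ determined by an explicit elliptic equation~\eqref{pi,n,eq}, and then recovers $\div u=0$ a posteriori from the parabolic equation~\eqref{div u,eq} satisfied by $\div u$. This is worth noting because the viscous term here is $\div(2\mu(\rho)Au)$ with $Au$ the \emph{antisymmetric} part of the gradient and with variable density and viscosity, so the reduction to a textbook Stokes system is not entirely immediate; the paper's explicit construction sidesteps that issue.
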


\begin{proof}Throughout the proof, we will use frequently the notation $C_\eps$ to denote the constants which may depend on $\eps,T,\ud\rho,\ov\rho$ and $R_0$ whereas the notation $C_\eps(m)$, denotes those  constants $C_\eps$ depending additionally on $m$.

We consider first the linear equation for $\varrho$:
\begin{equation}\label{density,eps,eq}\left\{\begin{array}{ccc}
\d_t \varrho+\langle\tilde u\rangle_\eps\cdot\nabla\varrho-\div(\langle\tilde\kappa\rangle_\eps\nabla\varrho)&=&0,\\
\varrho|_{t=0}&=&\langle\varrho_0\rangle_\eps.
\end{array}\right.
\end{equation}

To solve it, we will use Friedrich's method.
For any $n\in\N$, we define the space $L^2_n$ to be the closed set of $L^2$ functions with Fourier transform supported in the ball of center $0$ and radius $n$ and the associated orthogonal projector $P_n$ is defined by
$\hat{P_n f}(\xi)=1_{|\xi|\leq n}\hat f(\xi)$, then we immediately get  a unique solution $\varrho_n\in C([0,T];L^2_n)\cap C^1((0,T);L^2_n)$ to the following system:
\begin{equation}\label{density_n}\left\{\begin{array}{ccc}
\d_t\varrho_n+P_n(\langle\tilde u\rangle_\eps\cdot\nabla\varrho_n)-P_n\div(\langle\tilde\kappa\rangle_\eps\nabla\varrho_n)&=&0,\\
\varrho_n|_{t=0}&=&P_n\langle\varrho_0\rangle_\eps.
\end{array}\right.\end{equation}
In fact, it is easy to see that  the above equation  is a linear ordinary differential equation on $L^2_n$.

 Now taking the $L^2(\R^d)$ inner product between (\ref{density_n}) and $\varrho_n$ and integrating by parts give the following a priori estimate (noticing that $\div \langle\tilde u\rangle_\eps=0$, $P_n\varrho_n=\varrho_n$ and $\langle P_n f,g\rangle_{L^2}=\langle f,P_n g\rangle_{L^2}$):
$$
\frac12\frac{d}{dt}\|\varrho_n\|_{L^2}^2+\Int_{\R^d} \langle\tilde\kappa\rangle_\eps|\nabla\varrho_n|^2=0,
$$
that is,
\begin{equation}\label{density,n,L2}
\|\varrho_n\|_{L^\infty_T(L^2)}^2+C\|\nabla\varrho_n\|_{L^2_T(L^2)}^2
\leq \|\varrho_n|_{t=0}\|_{L^2}^2\leq \|\varrho_0\|_{L^2}^2.
\end{equation}
Similarly,  we can multiply (\ref{density_n}) by $\Delta \varrho_n$, $\Delta^2\varrho_n$, $\cdots$ and integrate on the whole space $\R^d$, to get
\begin{equation}\label{density,n,Hk}
\|\varrho_n\|_{L^\infty_T(H^m)}^2+\|\varrho_n\|_{L^2_T(H^{m+1})}^2
\leq C_\eps(m),\quad\forall m\geq 0.
\end{equation}
Hence, the fact that $\varrho_n$ solves (\ref{density_n}) implies that
\begin{equation}\label{density,n,dt}
 \|\d_t \varrho_n\|_{L^\infty_T(H^m)}
 \leq \|\langle\tilde u\rangle_\eps\cdot\nabla\varrho_n\|_{L^\infty_T(H^m)}
 +\|\langle\tilde\kappa\rangle_\eps\nabla\varrho_n\|_{L^\infty_T(H^{m+1})}\leq C_\eps(m).
 \end{equation}
Thus by Inequality \eqref{density,n,Hk},  Inequality \eqref{density,n,dt} and Arzel\`{a}-Ascoli Theorem, there exists one unique $\varrho\in C([0,T];H^\infty)$ such that  for any fixed $m\geq 0$, one has a convergent subsequence $\{\varrho_{n(m)}\}$ with
\begin{equation}\label{density,n,conv}
\varrho_{n(m)} \rightarrow \varrho \textrm{ in }L^\infty_T(H^m_\loc).
\end{equation}

 Now we rewrite (\ref{density_n}) as
 \begin{equation}\label{density,n,equation}
 \d_t \varrho_n+\langle\tilde u\rangle_\eps\cdot\nabla\varrho_n-\div(\langle\tilde\kappa\rangle_\eps \nabla\varrho_n)
 =(\Id-P_n)\div(\langle\tilde u\rangle_\eps \varrho_n-\langle\tilde\kappa\rangle_\eps\nabla\varrho_n).
 \end{equation}
 Since $\forall s\in\R$, we have
 \begin{equation}\label{source}
 \|(\Id-P_n)f\|_{H^s}
\leq \frac1n\|f\|_{H^{s+1}},
 \end{equation}
 thus let $n(m)$ go to $\infty$, then the above control and the convergence result (\ref{density,n,conv}) implies that the limit $\varrho\in C([0,T];H^\infty)$ really solves (\ref{density,eps,eq}) and satisfies Estimates  (\ref{density,n,L2}), (\ref{density,n,Hk}) and \eqref{density,n,dt}.
Moreover, by maximum principle, we have
\begin{equation}\label{density,bound}0<\ud \rho\leq \varrho+1\leq \ov\rho.\end{equation}

\smallbreak

Now we move to solve the following system  in $C([0,T];H^\infty)$  with  $\varrho\in C([0,T];H^\infty)$ satisfying \eqref{density,eps,eq}  given by the Step 1 (which amounts to solving $(\ref{operator})_2-\eqref{operator}_3-\eqref{operator}_5$):
\begin{equation}\label{u,eps,eq}\left\{\begin{array}{ccc}
\rho\d_t u+(\rho\langle\tilde u\rangle_\eps-\langle\tilde\kappa\rangle_\eps\nabla\rho)\cdot\nabla u-\div(2 \mu  Au)+\nabla\pi&=&0,\\
\div u&=&0,\\
 u|_{t=0}&=&\langle u_0\rangle_\eps.
\end{array}\right.\end{equation}
 We will proceed exactly as above.
  Firstly, we look for $u_n\in C([0,T];L^2_n)\cap C^1((0,T);L^2_n)$ satisfying
\begin{equation}\label{u,n,equation}\left\{\begin{array}{ccc}
\d_t u_n +P_n(Lu_n)&=&0,\\
u_n|_{t=0}&=&P_n\langle u_0\rangle_\eps,
\end{array}\right.\end{equation}
where the linear operator $L$ is defined by
\footnote{We multiply $\eqref{u,eps,eq}_1$ by $\rho^{-1}$ and rewrite the quantity $-\rho^{-1}\div(2\mu Au)$ into a summation of one 2-order  term and one 1-order term by use of $\div u=0$.}
\begin{equation}\label{L}
Lu_n=\Bigl(\langle\tilde u\rangle_\eps-\langle\tilde\kappa\rangle_\eps\rho^{-1}\nabla\rho\Bigr)\cdot\nabla u_n+\rho^{-1}\nabla\mu\cdot D u_n-\rho^{-1}\div(\mu\nabla u_n)+\rho^{-1}\nabla\pi_n,
\end{equation}
with $\nabla\pi_n$ satisfying
\begin{align}\label{pi,n,eq}
\div(\rho^{-1}\nabla\pi_n)=-\div\Bigl((\langle\tilde u\rangle_\eps-\rho^{-1}\langle\tilde\kappa\rangle_\eps\nabla\rho)\cdot\nabla u_n+2 \mu \nabla\rho^{-1}\cdot Au_n\Bigr).
\end{align}
We point out here that the following equality holds true:
\begin{equation}\label{equality}
\div(\rho^{-1}\div(2\mu Au))\equiv -\div(2\mu\nabla\rho^{-1}\cdot Au).
\end{equation}

It is easy to see   that the unique map from $u_n$ to $\nabla\pi_n$ defined by the equation (\ref{pi,n,eq})
 is continuous such that
 \begin{equation}\label{pi,n,L2}
 \|\nabla\pi_n\|_{L^2}
 \leq C_\eps\|\nabla u_n\|_{L^2}\leq C_\eps(n)\|u_n\|_{L^2_n}.
 \end{equation}
Therefore  the linear map $u_n\mapsto P_n(Lu_n)$ is continuous on $L^2_n$ which, ensures  one unique solution $u_n\in C([0,T];L^2_n)\cap C^1((0,T);L^2_n)$ of System (\ref{u,n,equation}).


Now we are at the point to get the uniform estimates for $u_n$ to show the convergence. Taking the $L^2$ inner product between (\ref{u,n,equation}) and $u_n$  implies that
$$\frac12 \frac{d}{dt}\Int_{\R^d}|u_n|^2+\Int_{\R^d}\mu\rho^{-1}  |\nabla u_n|^2
\leq C_\eps\|\nabla u_n\|_{L^2}\|u_n\|_{L^2}.$$
Thus by H\"{o}lder's inequality the following uniform estimate for $u_n$ holds:
\begin{equation}\label{u,n,L2}
\|u_n\|_{L^\infty_T(L^2)}+\|\nabla u_n\|_{L^2_T(L^2)}\leq C_\eps\|u_0\|_{L^2}.
\end{equation}
Since  by induction we have from \eqref{pi,n,eq} that
$$\|\nabla\pi_n\|_{H^m}\leq C_\eps(m)\|\nabla u_n\|_{H^m},$$
 we can multiply (\ref{u,n,equation}) by $\Delta^m u_n$, with $m\geq 0$  to derive
\begin{equation}\label{u,n,Hk}
\|u_n\|_{L^\infty_T(H^m)}+\|\d_t u_n\|_{L^\infty_T(H^m)}+\|\nabla\pi_n\|_{L^\infty_T(H^m)}\leq C_\eps(m),\quad\forall m\geq 0.
\end{equation}
Hence there exist $u\in C([0,T];H^\infty)$ and $\nabla\pi\in L^\infty((0,T];H^\infty)$ which is given by  (\ref{pi,n,eq}) with $u_n$ replaced by $u$, such that $u(0)=\langle u_0\rangle_\eps$ and for any fixed $m\geq 0$, there exists a subsequence $u_{n(m)}$, $\nabla\pi_{n(m)}$  verifying
$$u_{n(m)} \rightarrow u \textrm{ in }L^\infty_T(H^m_\loc),\quad \nabla\pi_{n(m)}\rightharpoonup \nabla\pi\textrm{ in }L^\infty_T(H^m_\loc).$$
Moreover, \eqref{source} entails
\begin{equation}\label{u,eqn}
\d_t u+ Lu=0.
\end{equation}
Applying the divergence operator $\div$ to it yields
\footnote{By Definition \eqref{L} of the operator $L$, Equation \eqref{pi,n,eq} of $\pi$ and Equality \eqref{equality}, we have
$$\div Lu=\div(\rho^{-1}\nabla\mu\cdot Du-\rho^{-1}\div(\mu\nabla u)+\rho^{-1}\div(2\mu Au))
=\div(\rho^{-1}\nabla\mu\cdot Du-\rho^{-1}\div(\mu Du))
=-\div(\mu\rho^{-1}\nabla\div u).$$}
\begin{equation}\label{div u,eq}\d_t(\div u)-\div\Bigl(\mu\rho^{-1}\nabla\div u\Bigr)=0.\end{equation}
 Thus the parabolic equation (\ref{div u,eq}) ensures that $\div u=0$. Therefore $u$ truly solves (\ref{u,eps,eq}).
  Furthermore, we take the inner product between (\ref{u,eps,eq}) and $u$, issuing the Energy Equality (\ref{energy iden:velocity}) by use of Equation (\ref{density,eps,eq}) for $\rho$, which together with the identity $\|Au\|_{L^2}=\|\nabla u\|_{L^2}$ (by $\div u=0$) entails
 \begin{equation}\label{u,L2}
 \|u\|_{L^\infty([0,t];L^2)}+\|\nabla u\|_{L^2((0,t];L^2)}\leq C(\ud\rho,\ov\rho)\|u_0\|_{L^2},\quad\,\forall t>0.
 \end{equation}

\smallbreak

At last, noticing (\ref{density,n,L2}) and (\ref{u,L2}), we just have to choose $R_0$ depending on $\|\varrho_0\|_{L^2},\|u_0\|_{L^2},\ud\rho,\ov\rho$ such that the operator $F:(\tilde\rho,\tilde u)\mapsto (\rho,u)$  maps from $E_{R_0,T}$ to $E_{R_0,T}$. Furthermore, the boundedness \eqref{density,n,Hk}, \eqref{density,n,dt} and \eqref{u,n,Hk} ensures that
$$
F:E_{R_0,T}\mapsto E_{R_0,T}\cap\Bigl\{(\rho,u)\,|\, \|(\varrho,\d_t\rho,u,\d_t u)\|_{L^\infty_T(H^m)}\leq C_\eps(m),\,\forall m\geq 0\Bigr\},
$$
which implies that $F$ is compact in $C([0,T];H^m_\loc)$ for all $m\geq 0$. Hence there exists one unique fixed point $(\varrho,u)\in C([0,T]; H^\infty)$ of the operator $F$ in $E_{R_0,T}$ which is also the solution to System (\ref{system_eps}). This ends the proof of Proposition \ref{prop:regular}.
\end{proof}

\begin{rem}\label{rem:linear,global}
Since in Banach space $E_{R_0,T}$, the bound $R_0$ is independent of the time $T$,  Proposition \ref{prop:regular} actually permits that for any $\eps$, there exists a unique globally-in-time existing smooth solution $(\varrho^\eps,u^\eps)\in C([0,+\infty);H^\infty)$ to System \eqref{system_eps} such that $\forall \eps>0$,
\begin{equation}\label{bound_eps}
\rho^\eps\in [\ud\rho,\ov\rho],\, \|\varrho^\eps\|_{L^\infty(L^2)}+\|\nabla\rho^\eps\|_{L^2(L^2)}\leq C\|\varrho_0\|_{L^2(\R^d)},\, \|u^\eps\|_{L^\infty(L^2)}+\|\nabla u^\eps\|_{L^2(L^2)}\leq C\|u_0\|_{L^2(\R^d)},
\end{equation}
with   $C$ being a constant depending only on $\ud\rho,$ $\ov\rho$.
\end{rem}


\subsubsection{The convergence to a weak solution}\label{sec:convergence}
We now are at the point to  show that  the solution sequence given by Section \ref{sec:regular} converges to a weak solution to Cauchy problem \eqref{system}-\eqref{initial}.
The smoothing effect on both variables is  useful to use the compactness methods and the strategy is hence quite standard.
So let's just sketch the proof.

  By Remark \ref{rem:linear,global} at the end of Section \ref{sec:regular},  we  may assume that there exist subsequences $\{\rho^{\eps_n}\}_n$ and $\{u^{\eps_n}\}_n$ of the solution sequences $\{\rho^\eps\}_\eps$ and $\{u^\eps\}_\eps$ respectively such that
$$
\varrho^{\eps_n}\mathop{\rightharpoonup}\limits^\ast \varrho\textrm{ in }L^\infty(L^2\cap L^\infty), \quad u^{\eps_n}\mathop{\rightharpoonup}\limits^\ast u\textrm{ in }L^\infty(L^2),\quad \nabla\varrho^{\eps_n}\rightharpoonup \nabla\varrho,\quad \nabla u^{\eps_n}\rightharpoonup\nabla u\textrm{ in }L^2(L^2),
$$
with the limit $(\rho,u)$ verifying  Estimate \eqref{bound_eps} too.

Thanks to the uniform bound on $\nabla\rho$ and by use of the regularized system $\eqref{system_eps}_1-\eqref{system_eps}_4$, we can easily use   a diagonal process to show that $\rho$ solves Equation $\eqref{system}_1$   in distribution sense.
For example, there exists a subsequence, still denoted by $\kappa^{\eps_n}$, such that $\langle\kappa^{\eps_{n }}\rangle_{\eps_n}-\kappa \rightarrow 0, a.e.$
Thus $\langle\kappa^{\eps_{n }}\rangle_{\eps_n} \nabla\rho^{\eps_n} \rightharpoonup \kappa\nabla\rho $ in $L^2(L^2(\R^d))$.

The above bound  \eqref{bound_eps} furthermore ensures that  Equation $\eqref{system}_1$   holds in $L^2_\loc(H^{-1})$.
Thus, we can test it by the solution $\varrho\in L^2(H^1)$ itself such that
Energy Equality \eqref{energy iden:density} hold for $\varrho^\eps$ and $\varrho$ both (notice that $\div u=0$ in $L^2_w([0,\infty)\times \R^d)$), i.e.  for all $t\in[0,\infty)$,
\begin{equation}\label{ee,density_eps}
\frac 12 \|\varrho^\eps(t)\|_{L^2}^2+\|\langle\kappa^\eps\rangle_\eps^{\frac 12} \nabla\varrho^\eps\|_{L^2_t(L^2)}^2=\frac 12 \|\langle\varrho_0\rangle_\eps\|_{L^2}^2,\quad
\frac 12 \|\varrho(t)\|_{L^2}^2+\|\kappa^{\frac 12} \nabla\varrho\|_{L^2_t(L^2)}^2=\frac 12 \|\varrho_0\|_{L^2}^2.
\end{equation}
Now we consider the quantity
$$\frac 12\|\varrho^{\eps_n}(t)-\varrho(t)\|_{L^2(\R^d)}^2
      +\|\langle\kappa^{\eps_n}\rangle_{\eps_n}^{\frac 12} \nabla\varrho^{\eps_n}-\kappa^{\frac 12} \nabla\varrho\|_{L^2_t(L^2)}^2,$$
 which by Energy Equality \eqref{ee,density_eps}, is equal to
 $$
 \frac 12 \|\langle\varrho_0\rangle_{\eps_n}\|_{L^2(\R^d)}^2+\frac 12 \|\varrho_0\|_{L^2(\R^d)}^2-\Bigl\langle \varrho^{\eps_n}(t),\varrho(t)\Bigr\rangle_{L^2(\R^d)}
-2\Bigl\langle\langle\kappa^{\eps_n}\rangle_{\eps_n}^{\frac 12} \nabla\varrho^{\eps_n},\kappa^{\frac 12} \nabla\varrho\Bigr\rangle_{L^2_t(L^2)}.
$$
Since   we have also $ \langle\kappa^{\eps_{n}}\rangle_{\eps_{n}}^{\frac 12}\nabla\varrho^{\eps_{n}}\rightharpoonup \kappa^{\frac 12} \nabla\varrho$  in
$L^2(L^2)$, the above quantity converges to
$$\|\varrho_0\|_{L^2}^2-\|\varrho(t)\|_{L^2}^2-2\|\kappa^{\frac 12} \nabla\varrho\|_{L^2_t(L^2)}^2=0.$$
 This implies  that
$$\varrho^{\eps_n}\rightarrow \varrho\textrm{ in }L^\infty(L^2) \quad\textrm{and}\quad
\langle\kappa^{\eps_n}\rangle_{\eps_n}^{\frac 12} \nabla\varrho^{\eps_n}\rightarrow \kappa^{\frac 12} \nabla\varrho\textrm{ in }
L^2(L^2).$$
Thus, by the boundedness of $\|\varrho^{\eps_n}\|_{L^\infty([0,\infty)\times \R^d)}$, we have
$\varrho^{\eps_n}\rightarrow \varrho\textrm{ in }L^\infty(L^p)$, $\forall p\in [2,\infty)$. Therefore, $\varrho|_{t=0}=\varrho_0$ in $L^p$ for all $p\in [2,\infty)$.

\smallbreak 

The following statement concerning the velocity $u$ follows exactly Proof  of Theorem 2.4 in the P.-L. Lions's book \cite{PLions96}.
Let us recall it briefly for the reader's convenience.
Let us first observe that the Sobolev embedding ensures that $\{u^\eps\}_\eps$ is bounded in $L^\infty_T(L^2)\cap L^2_T(L^{\frac{2d}{d-2}})$ (or $L^2_T(L^p)$ with $p\in [2,\infty)$ if $d=2$) for any positive finite time $T$ and hence
\begin{align*}
&\{\langle u^\eps\rangle_\eps\otimes u^\eps\} \textrm{ is bounded in } L^\infty_T(L^1)\cap L^2_T(L^{\frac{d}{d-1}})\,(\textrm{or }L^2_T(L^p)\textrm{ with } p\in [1,2)\textrm{ if } d=2), \\
\textrm{ and }& \{\nabla\rho^\eps\otimes u^\eps\}\textrm{ is bounded in } L^2_T(L^1)\cap L^1_T(L^{\frac{d}{d-1}})\,(\textrm{or }L^1_T(L^p)\textrm{ with } p\in [1,2)\textrm{ if } d=2).
\end{align*}
Therefore, in view of the equation $(\ref{system_eps})_2$ for $u^\eps$, there exist constants $p\in (2,\infty)$, $m>1$
 \footnote{We notice that $H^{m_1}\hookrightarrow W^{m_2,q}$ if $m_1-\frac d2\geq m_2-\frac dq,\, q\geq 2$.} 
and $M$ depending on $T,R_0$ such that  for all divergence-free function $\phi\in L^p_T(H^m)$ we have
\begin{equation}\label{velocity:dt,bound}
\left|\Bigl\langle \d_t (\rho^\eps u^\eps),\phi\Bigr\rangle_{\cD',\cD}\right|
=\left|\Bigl\langle-\Bigl(\rho^\eps\langle u^\eps\rangle_\eps-\langle\kappa^\eps\rangle_\eps\nabla\rho^\eps \Bigr)\otimes u^\eps+ 2 \mu^\eps  Au^\eps,\nabla\phi\Bigr\rangle_{\cD',\cD}\right|
\leq M\|\phi\|_{L^p_T(H^m)}.
\end{equation}
Let us notice that the Leray projector $\cP:=\Id+\nabla(-\Delta)^{-1}\div$ is bounded on each Sobolev space $H^s$. Hence from (\ref{velocity:dt,bound}), we actually have $\d_t(\cP(\rho^\eps u^\eps))$ is bounded in $L^{p'}_T(H^{-m})$. Since $\rho^{\eps_n} u^{\eps_n}$ converges weakly to $\rho u$, the boundedness of $\{\rho^\eps u^\eps\}$ in $L^\infty_T(L^2)$ implies  the existence of one convergent subsequence  (still denoted by $\rho^{\eps_n}u^{\eps_n}$) such that $\cP(\rho^{\eps_n}u^{\eps_n})\rightarrow \cP(\rho u)$ in $C([0,T];L^2_w)$.
Hence, we have for any $t>0$,
\begin{align*}
\Int^t_0\Int_{\R^d}\rho^{\eps_n} |u^{\eps_n}|^2
=\Int^t_0 \langle \cP(\rho^{\eps_n}u^{\eps_n}), u^{\eps_n}\rangle
\rightarrow \Int^t_0 \langle \cP(\rho u ), u \rangle
=\Int^t_0\Int_{\R^d}\rho  |u |^2.
\end{align*}
Thus $\rho^{\eps_n}u^{\eps_n}\rightarrow \rho u $ and $ u^{\eps_n}\rightarrow u$ in $L^2_\loc(L^2)$.
Moreover, we apply Theorem 2.2 in \cite{PLions96} to get $u\in C([0,\infty);L^2_w)$.
It is easy to find that
$$\rho^{\eps_n}\langle u^{\eps_n}\rangle_{\eps_n}\otimes u^{\eps_n}\rightarrow \rho u\otimes u\textrm{ in }L^2_T(L^1),
\quad \mu^{\eps_n}Au^{\eps_n}\rightharpoonup \mu Au\textrm{ in }L^2_T(L^2),$$
and $
\langle\kappa^{\eps_n}\rangle_{\eps_n}\nabla\rho^{\eps_n}\otimes u^{\eps_n}\rightarrow \kappa \nabla\rho\otimes u
\textrm{ in }
L^1_T(L^1)$.
Thus observing Equation $(\ref{system_eps})_2$, there exists some distribution of gradient form $\nabla\pi$ such that Equation $(\ref{system})_2$ holds for the above limit $u$ at least in distribution sense and hence \eqref{weak solution:u} holds.
According to Theorem 2.2 in \cite{PLions96}, the conclusion $u\in C(L^2_w)$ results from the initial assumption $\div u_0=0$.

This completes the proof of Theorem \ref{thm:global}.


\subsection{The case with the density of higher regularity}\label{sec:H1}

In this  section we will tackle the case with smoother  density.
 It is easy to see that proving Theorem \ref{thm:global,v} equals to proving the following:\\
\,\, Let $d=2, 3$. For any initial data $(\rho_0,u_0)$ such that
\begin{equation}\label{initial,H1}
0<\ud\rho\leq \rho_0\leq \ov\rho,\quad \rho_0-1\in H^1(\R^d),
\quad u_0\in L^2(\R^d),\quad \div u_0=0,
\end{equation}
System \eqref{system} has a weak solution $(\rho,u)$ satisfying \eqref{sol:H1} and \eqref{rho,v,energy}.

Thus in the first paragraph of this section, by establishing a new a priori estimate in smoother functional space in dimension $2$, we deduce that if \eqref{initial,H1} holds for the initial data, then so do the weak solutions $(\rho,u)$ got in the last section.
 However in dimension $3$, we will reprove the existence of weak solutions by regularizing the system in two levels which, ensures us to get  the uniform estimate \eqref{EE:rho-H1,3D} when the transport velocity is still regularized.
 This will be done in the second paragraph.

\subsubsection{$2$D case}

We will establish two lemmas (Lemma \ref{lem:pi} and Lemma \ref{lem:est,L2}), in order to show that the global weak solutions $(\varrho,u,\nabla\pi)=(\rho-1,u,\nabla\pi)$ given by Theorem \ref{thm:global} but with smoother initial density $\varrho_0\in H^1(\R^2)$,  are bounded only by initial data in the Banach space $X_2(T)\times X_1(T)\times X_{-1}(T)$ for all $T\in [0,\infty]$ in dimension $2$ with
\begin{align*}
&X_2(T)\triangleq \{\varrho\in L^\infty_T(H^1(\R^2))\,|\, \nabla\varrho\in L^2_T(H^1(\R^2)),\quad 0<\ud\rho\leq \varrho(t)+1\leq \ov\rho,\,\forall\, t\in [0,T]\},\\
&X_1(T)\triangleq \{u\in L^\infty_T(L^2(\R^2))\, |\, \nabla u\in L^2_T(L^2(\R^2))\},\\
&X_{-1}(T)\triangleq (X_1(T))':\textrm{ the dual of the Banach space }X_1(T).
\end{align*}
However, in order  to prove uniqueness and stability, it is not enough to just consider the solutions in
$X_2(T)\times X_1(T)\times X_{-1}(T)$, since $H^1(\R^2)$ can not be embedded into $L^\infty(\R^2)$,
which is needed in estimating the nonlinear terms.
Therefore we will consider the initial data in the critical Besov spaces, in order to get a unique global strong solution.
This will be done in next section.

Throughout this section, we will use frequently (explicitly or implicitly) the following Gagliardo-Nirenberg inequality in dimension $2$:
\begin{equation}\label{ineq:GN}
\| f\|_{L^4(\R^2)}\lesssim \|f\|_{L^2(\R^2)}^{1/2}\|\nabla f\|_{L^2(\R^2)}^{1/2}.
\end{equation}

We notice that by (\ref{ineq:GN}), if  $\varrho\in X_2(T)$, then the mapping $h\mapsto f(\rho)h$ is an isomorphism on $X_1(T)$ (and hence $X_{-1}(T)$) for any diffeomorphism $f$ from $[\ud\rho,\ov\rho]$ to  $\R$. In fact, we have
$$\|f(\rho)h\|_{X_1(T)}\leq
C(f,\|\rho\|_{L^\infty_T(L^\infty)})
(\|h\|_{X_1(T)}+\|\nabla\rho\|_{L^4_T(L^4)}\|h\|_{L^4_T(L^4)})
\leq C(f,\|\varrho\|_{X_2(T)})\|h\|_{X_1(T)}.$$


We now prove first that $\nabla\pi\in X_{-1}(T)$ if $\varrho\in X_2(T),u\in X_1(T)$. Indeed, we just have to show that the convergent regular sequence $\nabla\pi^{\eps_n}$ which are solutions of the following equation (see Equation (\ref{pi,n,eq}))
\begin{equation}\label{pi,eq}
\div(\rho^{-1}\nabla\pi)
=-\div((\langle u\rangle_\eps-\rho^{-1}\langle\kappa\rangle_\eps\nabla\rho)\cdot\nabla u
          +2\mu\nabla\rho^{-1}\cdot Au),
\end{equation}
 are uniformly bounded in $X_{-1}(T)$. Hence we introduce the following lemma:
\begin{lem}\label{lem:pi}
Assume that the smooth triplet $(\varrho,u,\nabla\pi)$ satisfies Equation \eqref{pi,eq},
then there exists one constant $C_1$ depending only on $\|\varrho\|_{X_2(T)}$, $\|u\|_{X_1(T)}$ such that
\begin{equation}\label{pi,H-1}
\|\nabla\pi\|_{X_{-1}(T)}\leq C_1.
\end{equation}
\end{lem}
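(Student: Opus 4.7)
The plan is to bound $\nabla\pi$ in the dual $X_{-1}(T)=(X_1(T))'$ by testing against an arbitrary $\phi\in X_1(T)$ and converting the pairing into one involving the right-hand side of \eqref{pi,eq}, with the help of an auxiliary elliptic problem. Denote the right-hand side of \eqref{pi,eq} as $-\div F$, where $F:=(\langle u\rangle_\eps-\rho^{-1}\langle\kappa\rangle_\eps\nabla\rho)\cdot\nabla u+2\mu\nabla\rho^{-1}\cdot Au$. For fixed $\phi\in X_1(T)$, I would construct a scalar potential $q(t,\cdot)$ by Lax--Milgram in $\dot H^1(\R^2)$ satisfying
$$\div(\rho^{-1}\nabla q)=\div\phi\quad\hbox{on }\R^2,\quad t\in[0,T].$$
Then $\phi-\rho^{-1}\nabla q$ is divergence-free, and since $\nabla\pi$ is a gradient, integration by parts annihilates the divergence-free part; testing \eqref{pi,eq} against $q$ then yields
$$\int_0^T\!\!\int_{\R^2}\nabla\pi\cdot\phi\,dx\,dt=\int_0^T\!\!\int_{\R^2}\rho^{-1}\nabla\pi\cdot\nabla q\,dx\,dt=\int_0^T\!\!\int_{\R^2}F\cdot\nabla q\,dx\,dt.$$
It thus suffices to bound this last integral by $C_1\|\phi\|_{X_1(T)}$.

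The first main step is the auxiliary estimate $\|\nabla q\|_{X_1(T)}\le C(\ud\rho,\ov\rho,\|\varrho\|_{X_2(T)})\|\phi\|_{X_1(T)}$. The $L^\infty_T L^2$ part follows at once from the Lax--Milgram energy bound $\int\rho^{-1}|\nabla q|^2=\int\phi\cdot\nabla q$, since $\ud\rho\le\rho\le\ov\rho$. For the $L^2_T H^1$ part, I would rewrite the auxiliary equation as
$$\Delta q=\rho\,\div\phi+\rho^{-1}\nabla\rho\cdot\nabla q.$$
The first term belongs to $L^2_T L^2$ with bound $\ov\rho\|\nabla\phi\|_{L^2_T L^2}$, while the second is controlled, using the 2D Gagliardo--Nirenberg inequality \eqref{ineq:GN} and $\nabla\rho\in L^4_T L^4$ (which follows from $\varrho\in X_2(T)$ via \eqref{ineq:GN}), by
$$\|\rho^{-1}\nabla\rho\cdot\nabla q\|_{L^2_x}^2\lesssim\|\nabla\rho\|_{L^4_x}^2\|\nabla q\|_{L^2_x}\|\nabla^2 q\|_{L^2_x}.$$
Young's inequality absorbs half of $\|\nabla^2 q\|_{L^2}^2$ on the left; integration in time together with the already-established $L^\infty_T L^2$ bound then closes the estimate.

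In the second step I would estimate $F$ in $L^{4/3}_T L^{4/3}_x$. Hölder in space with $\tfrac14+\tfrac12=\tfrac34$ gives $\|F\|_{L^{4/3}_x}\lesssim(\|u\|_{L^4_x}+\|\nabla\rho\|_{L^4_x})\|\nabla u\|_{L^2_x}$, where I have used $\ud\rho\le\rho\le\ov\rho$ and the fact that mollification does not increase $L^p$ norms. Hölder in time, together with the 2D embeddings $\|u\|_{L^4_T L^4}\lesssim\|u\|_{X_1(T)}$ and $\|\nabla\rho\|_{L^4_T L^4}\lesssim\|\varrho\|_{X_2(T)}$ coming from \eqref{ineq:GN}, produces
$$\|F\|_{L^{4/3}_T L^{4/3}_x}\le C(\|u\|_{X_1(T)}+\|\varrho\|_{X_2(T)})\,\|u\|_{X_1(T)}.$$
Combining with $\nabla q\in X_1(T)\hookrightarrow L^4_T L^4$ (again by \eqref{ineq:GN}), one last Hölder gives
$$\Bigl|\int_0^T\!\!\int_{\R^2}F\cdot\nabla q\,dx\,dt\Bigr|\le\|F\|_{L^{4/3}_T L^{4/3}_x}\|\nabla q\|_{L^4_T L^4_x}\le C_1\|\phi\|_{X_1(T)},$$
which is the claimed \eqref{pi,H-1}.

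The main technical obstacle is Step 1: since $\rho$ belongs only to $X_2(T)$ and is in particular not Lipschitz in space, one cannot invoke classical $W^{1,p}$ elliptic theory for the variable-coefficient operator $\div(\rho^{-1}\nabla\cdot)$ with $p\ne2$, and the $X_1(T)$-bound on the auxiliary potential must instead be built by hand via a Gagliardo--Nirenberg absorption tailored to dimension two. Once that step is secured, Steps 2 and 3 reduce to routine Hölder bookkeeping, and the specific exponent $4/3$ is dictated precisely by the 2D embedding $X_1(T)\hookrightarrow L^4_T L^4_x$ that governs the dual side.
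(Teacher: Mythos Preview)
Your proposal is correct and follows essentially the same strategy as the paper: both hinge on a weighted Helmholtz-type decomposition of the test function, obtained by solving an auxiliary elliptic problem with coefficient $\rho^{-1}$, and both secure the $X_1(T)$-bound for the resulting potential via the 2D Gagliardo--Nirenberg inequality \eqref{ineq:GN} plus a Young-type absorption (exactly the obstacle you flag in your final paragraph). The paper's version first Helmholtz-decomposes $\rho h$ via Riesz transforms and then corrects with an elliptic problem, whereas you solve $\div(\rho^{-1}\nabla q)=\div\phi$ in one shot; up to this cosmetic difference (and a harmless sign in your displayed identity), the arguments coincide.
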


\begin{proof}
 The proof is very similar to  the proof of Lemma 2.1 in \cite{PLions96}. Given any $h\in X_1(T)$, we first claim that we have the  decomposition $h=h_1+h_2$ such that
 \begin{equation}\label{est:decomposition}
\cR\wedge(\rho h_1)=0,\quad \div h_2=0,\quad
\|h_i\|_{X_1(T)}\leq C(\|\varrho\|_{X_2(T)})\|h\|_{X_1(T)},\quad i=1,2,
\end{equation}
where  $\cR_i=(-\Delta)^{-1/2}\frac{\d}{\d x_i}$, $i=1,2$ denotes the usual Riesz transform and $f\wedge g:= f_1g_2-f_2g_1$.

In fact, we decompose the function $\rho h\in X_1(T)$ such that
$$\rho h=\tilde h_1+\tilde h_2=-\cR(\cR\cdot (\rho h))+(\Id+\cR(\cR\cdot))(\rho h),\quad \cR\wedge \tilde h_1=0,\quad \cR\cdot\tilde h_2=0,$$
and
$$\|\tilde h_i\|_{X_1(T)}\leq C\|\rho h\|_{X_1(T)}\leq C(\|\varrho\|_{X_2(T)})\|h\|_{X_1(T)},\, i=1,2.$$
To prove (\ref{est:decomposition}) then amounts to searching for a unique function $\cR v\in X_1(T)$ such that
$$h_1=\rho^{-1}\tilde h_1-\rho^{-1}\cR v,\quad h_2=\rho^{-1}\tilde h_2+\rho^{-1}\cR v,$$
with
\begin{equation}\label{lemma,v,eq}\div(\rho^{-1}\nabla V+\rho^{-1}\tilde h_2)=0,\quad V=(-\Delta)^{-1/2}v.\end{equation}
According to Section 3 in \cite{Danchin10euler}, Equation (\ref{lemma,v,eq}) admits one unique solution $\nabla V=\cR v$ such that
$$\|\cR v\|_{L^\infty_T(L^2)}\leq C\|\tilde h_2\|_{L^\infty_T(L^2)}\leq C\|\rho h\|_{L^\infty_T(L^2)}
\leq C(\|\varrho\|_{X_2(T)})\|h\|_{L^\infty_T(L^2)}.$$
Now we take the derivative $\nabla$ to (\ref{lemma,v,eq}) to arrive at
$$\div(\rho^{-1}\nabla^2 V+\nabla V\otimes \nabla\rho^{-1}+(\nabla(\rho^{-1}\tilde h_2))^T)=0,$$
which similarly gives
$$\|\nabla\cR v\|_{L^2_T(L^2)}\leq C(\|\varrho\|_{X_2(T)})\Bigl(\|\nabla V\|_{L^4_T(L^4)}\|\nabla\rho^{-1}\|_{L^4_T(L^4)}+\|\rho^{-1}\tilde h_2\|_{X_1(T)}\Bigr).$$
Thus by (\ref{ineq:GN}) and Young's inequality (\ref{est:decomposition}) follows.

\medskip

By the decomposition (\ref{est:decomposition}), we have for any $h\in X_1(T)$,
 $$\langle\nabla\pi,h\rangle_{X_{-1}(T),X_1(T)}=\langle\nabla\pi, h_1\rangle=\langle\rho^{-1}\nabla\pi, \rho h_1\rangle.$$
Therefore Equation (\ref{pi,eq}) for $\nabla\pi$ yields
\begin{align*}
\left|\langle\nabla\pi,h\rangle\right|
&=\left|\Bigl\langle (\langle u\rangle_\eps-\rho^{-1}\langle\kappa\rangle_\eps\nabla\rho)\cdot\nabla u+2\mu\nabla\rho^{-1}\cdot Au, \rho h_1\Bigr\rangle\right|\\
&\leq C(\|u\|_{L^4_T(L^4)}+\|\nabla\rho\|_{L^4_T(L^4)})\|\nabla u\|_{L^2_T(L^2)}\|\rho h_1\|_{X_1(T)}\\
&\leq C(\|\varrho\|_{X_2(T)},\|u\|_{X_1(T)})\|h_1\|_{X_1(T)},
\end{align*}
which gives the lemma by \eqref{est:decomposition}.
\end{proof}

\begin{rem}\label{rem:X}
We point out here that, $L^2_T(H^{-1}(\R^2))\subset X_{-1}(T)$  by definition and $\Delta u\in L^2_T(H^{-1}(\R^2))$ since $\nabla u\in L^2_T(L^2(\R^2))$. But since the ``low'' regularity $\varrho\in X_2(T)$ can not permit that $h\mapsto \rho h$ is an isomorphism on $L^2_T(H^1(\R^2))$ due to a lack of control on $\|\nabla\rho\otimes h \|_{L^2_T(L^2)}$, it is not clear that $\|\nabla\pi\|_{L^2_T(H^{-1}(\R^2))}$ is bounded in Lemma \ref{lem:pi}. Even if $\rho\equiv 1$, that is in the classical incompressible Navier-Stokes equation case, it is well-known that the pressure term $\nabla\pi$ can be  bounded in $L^2_T(\cM(\R^2))$ with $\cM(\R^2)$ the measure space, denoting the dual space of $C_0(\R^2)$.
\end{rem}

It is easy to check that the following lemma concerning the unknowns $\rho,u,\nabla\pi$ immediately  yields Theorem \ref{thm:global,v} when $d=2$, which is omitted here.

\begin{lem}\label{lem:est,L2}
Let $d=2$. For any initial data $(\rho_0,u_0)$ satisfying \eqref{initial,H1},   the global weak solution $(\varrho,u,\nabla\pi)$ given by Theorem \ref{thm:global} satisfies
\begin{equation}\label{2destimate}
\|\varrho\|_{X_2(T)}+\|u\|_{X_1(T)}+\|\nabla\pi\|_{X_{-1}(T)}+\|\d_t\rho\|_{L^2_T(L^2)}+\|\d_t u\|_{X_{-1}(T)}\leq C_2,
\end{equation}
with $C_2$ depending only on $\ud\rho,\ov\rho,\|\varrho_0\|_{H^1(\R^2)},\|u_0\|_{L^2(\R^2)}$.
Moreover, Energy Equality \eqref{energy iden:velocity} holds and $\varrho\in C([0,\infty);H^1(\R^2)$, $u\in C([0,\infty);L^2(\R^2))$.
\end{lem}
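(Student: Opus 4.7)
The plan is to inherit every bound in \eqref{2destimate} from the regularized solutions $(\rho^\eps,u^\eps)$ of Proposition \ref{prop:regular} via a new uniform-in-$\eps$ $H^1$ estimate on the density, and then to pass to the limit along the lines of Section \ref{sec:convergence} but in the stronger topology supplied by this new bound. The only genuinely new input needed is the $H^1$ control on $\rho$; all other pieces of \eqref{2destimate} follow with routine work. To derive the $H^1$ estimate cleanly, we use the nonlinear change of unknown $K=K(\rho)$ defined by $K'(\rho)=\kappa(\rho)$ and $K(1)=0$: since $\kappa$ is smooth and bounded away from zero on $[\ud\rho,\ov\rho]$, $K$ is a diffeomorphism onto a compact interval and the condition $\varrho\in X_2(T)$ is equivalent to $\nabla K\in L^\infty_T(L^2)\cap L^2_T(H^1)$. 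At the limit level, $K$ satisfies the transport-diffusion equation
\begin{equation*}
\d_t K+u\cdot\nabla K=\kappa(\rho)\,\Delta K.
\end{equation*}
Testing against $-\Delta K$ and using $\div u=0$ to bound $\bigl|\int u\cdot\nabla K\,\Delta K\bigr|\leq\|\nabla u\|_{L^2}\|\nabla K\|_{L^4}^2$ (by the same integration by parts as in \eqref{EE:rho-H1}) yields
\begin{equation*}
\tfrac12\tfrac{d}{dt}\|\nabla K\|_{L^2}^2+\int\kappa(\rho)|\Delta K|^2\leq \|\nabla u\|_{L^2}\|\nabla K\|_{L^4}^2.
\end{equation*}
The 2D Gagliardo--Nirenberg inequality \eqref{ineq:GN} applied to $\nabla K$ gives $\|\nabla K\|_{L^4}^2\lesssim\|\nabla K\|_{L^2}\|\Delta K\|_{L^2}$; Young's inequality absorbs one power of $\|\Delta K\|_{L^2}$ into the dissipation, leaving $\frac{d}{dt}\|\nabla K\|_{L^2}^2+c\|\Delta K\|_{L^2}^2\lesssim\|\nabla u\|_{L^2}^2\|\nabla K\|_{L^2}^2$. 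Gronwall combined with $\int_0^\infty\|\nabla u^\eps\|_{L^2}^2\lesssim\|u_0\|_{L^2}^2$ (from \eqref{bound_eps}) then closes the estimate in terms of $\ud\rho,\ov\rho,\|\varrho_0\|_{H^1},\|u_0\|_{L^2}$ only.

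Once this $X_2(T)$ bound on $\varrho$ is in hand, the remaining pieces of \eqref{2destimate} come cheaply. The bound on $\|\d_t\rho\|_{L^2_T(L^2)}$ is read off from $\d_t\rho=-u\cdot\nabla\rho+\div(\kappa\nabla\rho)$ using H\"older together with \eqref{ineq:GN} (so that, for instance, $\|u\cdot\nabla\rho\|_{L^2_T(L^2)}^2\leq\|u\|_{L^\infty_T(L^2)}\|\nabla\rho\|_{L^\infty_T(L^2)}\|\nabla u\|_{L^2_T(L^2)}\|\Delta\rho\|_{L^2_T(L^2)}$). The bound on $\|\nabla\pi\|_{X_{-1}(T)}$ is exactly Lemma \ref{lem:pi}. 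For $\|\d_t u\|_{X_{-1}(T)}$ we write $\rho\,\d_t u=\d_t(\rho u)-u\,\d_t\rho$ and pair each piece against a test function $\phi\in X_1(T)$ via the momentum equation, every term being controllable through H\"older and the newly acquired $X_2(T)$ bound on $\varrho$. Passage to the limit in $\eps$ then follows Section \ref{sec:convergence} verbatim, strengthened by the new uniform bounds.

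The main obstacle will be justifying the $K$-type estimate at the regularized level: in the system \eqref{system_eps} the diffusion coefficient of $\eqref{system_eps}_1$ is the mollified $\langle\kappa^\eps\rangle_\eps$ rather than $\kappa(\rho^\eps)$, so no exact equation for $K(\rho^\eps)$ is available. The cleanest workaround is to modify the approximation scheme so as to keep the diffusion coefficient $\kappa(\rho^\eps)$ unmollified (this is harmless for Proposition \ref{prop:regular} since $\rho^\eps$ is already smooth, cf.\ the footnote to \eqref{system_eps}); with this change $K(\rho^\eps)$ satisfies $\d_t K(\rho^\eps)+\langle u^\eps\rangle_\eps\cdot\nabla K(\rho^\eps)=\kappa(\rho^\eps)\Delta K(\rho^\eps)$ with $\div\langle u^\eps\rangle_\eps=0$, and the estimate sketched above goes through verbatim with $u$ replaced by $\langle u^\eps\rangle_\eps$ (using $\|\nabla\langle u^\eps\rangle_\eps\|_{L^2}\leq\|\nabla u^\eps\|_{L^2}$). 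Finally, the continuity and energy identity come out as follows: $\varrho\in C([0,\infty);H^1)$ follows from $K\in L^\infty_T(H^1)\cap L^2_T(H^2)$ together with $\d_t K\in L^2_T(L^2)$ via a standard Lions--Magenes argument; Energy Equality \eqref{energy iden:velocity} is then a legitimate consequence of testing the momentum equation against $u$ itself, justified by $\d_t u\in X_{-1}(T)$ and $u\in X_1(T)$; and $u\in C([0,\infty);L^2)$ is obtained by upgrading the weak continuity provided by Theorem \ref{thm:global} to strong continuity, using the continuity of $t\mapsto\int\rho|u|^2$ supplied by \eqref{energy iden:velocity}.
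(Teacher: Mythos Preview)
Your approach is essentially the paper's: the heart of the argument is the substitution $K=K(\rho)$ with $K'=\kappa$, testing its equation against $-\Delta K$, and closing via Gagliardo--Nirenberg, Young, and Gronwall with the already known $L^2$-in-time bound on $\nabla u$. The paper bounds $\bigl|\int u\cdot\nabla K\,\Delta K\bigr|$ directly by $\|u\|_{L^4}\|\nabla K\|_{L^4}\|\Delta K\|_{L^2}$ and Gronwalls with $\int_0^T\|u\|_{L^4}^4\lesssim\|u_0\|_{L^2}^4$, whereas you integrate by parts first (using $\div u=0$) to get $\|\nabla u\|_{L^2}\|\nabla K\|_{L^4}^2$; both routes close equally well (your variant is in fact what the paper uses in the $3$D subsection). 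The treatment of $\d_t\rho$, $\nabla\pi$, $\d_t u$, the energy equality, and the continuity statements is the same in substance.

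One point deserves care. The paper in $2$D simply works a priori on the weak solution and does not touch the regularized system at all; you instead choose to run the $K$-estimate at the $\eps$-level and pass to the limit, which is perfectly legitimate but forces you to confront the mollified coefficient $\langle\kappa\rangle_\eps$. Your proposed fix --- ``keep $\kappa(\rho^\eps)$ unmollified; this is harmless for Proposition~\ref{prop:regular} since $\rho^\eps$ is already smooth'' --- is not quite right as written: in the fixed-point map $F$ of \eqref{operator} the diffusion coefficient is $\kappa(\tilde\rho)$ with $\tilde\rho$ the \emph{input} iterate, which lives only in $E_{R_0,T}$ and is not smooth, so the footnote you cite (which concerns $\mu(\rho)$ with $\rho$ the \emph{output} of step one) does not apply. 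The clean remedy is exactly what the paper does in its $3$D subsection: introduce a second mollification parameter $\delta$ on $\kappa$, solve \eqref{system_eps_delta}, send $\delta\to0$ first (along the lines of Section~\ref{sec:convergence}) to obtain \eqref{system_eps:v} with genuine $\kappa(\rho^\eps)$, and only then run your $K$-estimate and send $\eps\to0$. With that adjustment your argument goes through.
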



\begin{proof} By Theorem \ref{thm:global}, Equation $\eqref{system}_1$ and Lemma \ref{lem:pi}, to prove (\ref{2destimate}) it rests to prove $\varrho\in X_2(T)$ and $\d_t u\in X_{-1}(T)$, which can be assumed to be right  a priori. In fact, since if $u,\nabla\rho\in X_1(T)$, then $\rho u,\kappa\nabla\rho\in X_1(T)$ by \eqref{ineq:GN} and hence
\begin{equation}\label{est:rho_t}
\|\d_t\rho\|_{L^2_T(L^2(\R^2))}=\|-\div(\rho u-\kappa\nabla\rho)\|_{L^2_T(L^2(\R^2))}\leq C(\|\varrho\|_{X_2(T)},\|u\|_{X_1(T)}).
\end{equation}

Let us deal with $\varrho$ first. Set $K$ to be an antiderivative of $\kappa$ such that $K(1)=0$. Since  $\nabla K=\kappa\nabla\rho\in X_1(T)$, then $K\in L^\infty_T(H^1\cap L^\infty)$ and $\nabla K\in L^2_T(H^1)$. Multiplying $(\ref{system})_1$ by $\kappa=\kappa(\rho)$ yields
\begin{equation}\label{eq:K}
\d_t K+u\cdot\nabla K-\kappa\Delta K=0\textrm{ in }L^2_T(L^2).
\end{equation}
Taking the $L^2(\R^2)$ inner product with $\Delta K$ issues
\begin{equation}\label{equation:K}
\frac12\frac{d}{dt}\Int_{\R^2}|\nabla K|^2-\Int_{\R^2}u\cdot\nabla K\Delta K+\Int_{\R^2}\kappa|\Delta K|^2=0.
\end{equation}
For any $\varepsilon>0$, we have from Inequality \eqref{ineq:GN}  and Young's Inequality
\begin{align*}
\Bigl| \Int_{\R^2} u\cdot\nabla K\Delta K \Bigr|\leq \|u\|_{L^4}\|\nabla K\|_{L^4}\|\Delta K\|_{L^2}
\leq \eps\|\Delta K\|_{L^2}^2+C_\eps\|u\|_{L^4}^4\|\nabla K\|_{L^2}^2,
\end{align*}
for some constant $C_\eps$.
Let us choose sufficiently small $\varepsilon$, then we have shown (by (\ref{u,energy}))
\begin{equation}\label{estimate:K}
\sup_{0\leq t\leq T}\|\nabla K\|_{L^2}^2+\int^T_0\|\Delta K\|_{L^2}^2\leq e^{C\int^T_0\|u\|_{L^4}^4}C\|\nabla\varrho_0\|_{L^2}^2\leq  e^{C\|u_0\|_{L^2}^4}C\|\nabla\varrho_0\|_{L^2}^2 ,\quad\forall\, T\geq 0.
\end{equation}
It is also easy to see from \eqref{equation:K} that $\nabla K\in C([0,\infty);L^2)$.
Thus $\varrho\in C([0,\infty);H^1)$.

 Now since $\Delta K=\kappa\Delta\varrho+\nabla\kappa\cdot\nabla\varrho$, we have
\begin{align*}
\|\Delta\varrho\|_{L^2}\leq C(\|\Delta K\|_{L^2}+\|\nabla K\|_{L^4}^2)\leq C\|\Delta K\|_{L^2}(1+\|\nabla K\|_{L^2}),
\end{align*}
which already gives the estimate for $\varrho$:
\begin{equation}\label{estimate:temp'}
\sup_{0\leq t\leq T}\|\nabla\varrho(t)\|_{L^2}^2+\int^T_0\|\nabla^2\varrho\|_{L^2}^2\leq e^{C\|u_0\|_{L^2}^4}C\|\nabla\varrho_0\|_{L^2}^2(1+\|\nabla\varrho_0\|_{L^2}^2) ,\,\forall\, T\geq 0.
\end{equation}

Now we turn to the equation for $u$. It is easy to find that
$$\d_t u=-\rho^{-1}\Bigl(\d_t\rho\, u+\div((\rho u-\kappa\nabla\rho)\otimes u)-\div(2\mu Au)+\nabla\pi\Bigr).$$
Thus for any $h\in X_1(T)$, we have by \eqref{est:rho_t}
\begin{align*}
|\langle \d_t u,h\rangle_{X_{-1}(T),X_1(T)}|&\leq
\|\d_t\rho\|_{L^2_T(L^2)}\|u\|_{L^4_T(L^4)}\|\rho^{-1}h\|_{L^4_T(L^4)}
+|\langle\nabla\pi,\rho^{-1}h\rangle|\\
&\quad +\|(\rho u-\kappa\nabla\rho)\otimes u-2\mu Au\|_{L^2_T(L^2)}\|\nabla(\rho^{-1}h)\|_{L^2_T(L^2)}\\
&\leq C(\|\varrho\|_{X_2(T)},\|u\|_{X_1(T)},\|\nabla\pi\|_{X_{-1}(T)})\|h\|_{X_1(T)}.
\end{align*}
Hence (\ref{2destimate}) follows and $(\ref{system})_2$ holds in $X_{-1}(T)$.

In order to show the Energy Equality \eqref{energy iden:velocity}, we take the $\langle\cdot,\cdot\rangle_{X_{-1}(T),X_1(T)}$ inner product between Equation $(\ref{system})_2$ and $u$ to arrive at (notice $(\ref{system})_1$ holding in $L^2_T(L^2)$)
\begin{align*}
\frac12\frac{d}{dt}\Int_{\R^2}\rho|u|^2+2\Int_{\R^2}\mu|Au|^2=0,
\end{align*}
which gives \eqref{energy iden:velocity} immediately.
This implies $u\in C([0,\infty);L^2)$.
\end{proof}

\begin{rem}\label{rem:K}
Here we have to consider the function $K$ instead of directly the density $\rho$, in order to get estimates on $\|\nabla\rho\|_{L^\infty_T(L^2)\cap L^2_T(H^1)}$. In fact, if we directly take the derivative on $\eqref{system}_1$, then whether the quantity $\nabla\rho\Delta\rho$ issuing from the divergence term $\div(\kappa\nabla\rho)$ can be killed by the ``good'' term $\Delta\nabla\rho$ is not clear.
\end{rem}

\subsubsection{$3$D case}

Unlike the last paragraph, we do not have Inequality \eqref{ineq:GN} in dimension $3$.
Thus $u\not\in L^4(L^4(\R^3))$ and hence   the quantity $\int_{\R^3}u\cdot\nabla K\Delta K$ in \eqref{equation:K} doesn't make sense.
However, inspired by the computations before Theorem \ref{thm:global,v}, we can show first the uniform bounds as \eqref{EE:rho-H1,3D} for the regular approximated solutions $\rho^\eps$ and then, by lower semi-continuity of the $L^2$-norm, it holds for the weak solution $\rho$.

More precisely, for any $\eps>0$ and any $\delta>0$, we will consider the following regularized system of Cauchy problem \eqref{system}-\eqref{initial,H1}, instead of System \eqref{system_eps}\footnote{Due to Remark \ref{rem:K} and $u\not\in L^4(L^4)$, we regularize the system in two levels in order to get the   bound for the $H^1$-norm of  the density $\rho$ by considering the equation for  the scalar function $K=K(\rho)$ with $\nabla K=\kappa\nabla\rho$ where the transport velocity is regularized.}:
\begin{equation}\label{system_eps_delta}\left\{\begin{array}{ccc}
\d_t\rho+\div(\rho\,\langle u \rangle_\eps)-\div( \langle \kappa \rangle_\delta \nabla\rho)&=&0,\\
\d_t(\rho u)+\div\Bigl((\rho \langle u \rangle_\eps- \langle \kappa \rangle_\delta \nabla\rho)\otimes u\Bigr)-\div(2 \mu   Au)+\nabla\pi&=&0,\\
\div u&=&0,\\
(\rho,u)|_{t=0}&=&(\langle\rho_0\rangle_\eps, \langle u_0\rangle_\eps).
\end{array}\right.\end{equation}
By view of Subsection \ref{sec:regular}, the above system has a unique solution $(\rho^{\eps,\delta},u^{\eps,\delta})$ with $\rho^{\eps,\delta}-1, u^{\eps,\delta}\in C([0,\infty);H^\infty)$, such that Estimate \eqref{bound_eps} holds uniformly in $\eps$ and $\delta$.

Following Subsection \ref{sec:convergence}, it is easy to find that there exists a global-in-time weak solution $(\rho^\eps,u^\eps)$   to the following system
\begin{equation}\label{system_eps:v}\left\{\begin{array}{ccc}
\d_t\rho+\div(\rho\,\langle u \rangle_\eps)-\div(  \kappa  \nabla\rho)&=&0,\\
\d_t(\rho u)+\div\Bigl((\rho \langle u \rangle_\eps-   \kappa   \nabla\rho)\otimes u\Bigr)-\div(2 \mu   Au)+\nabla\pi&=&0,\\
\div u&=&0,\\
(\rho,u)|_{t=0}&=&(\langle\rho_0\rangle_\eps, \langle u_0\rangle_\eps).
\end{array}\right.\end{equation}

Moreover,  thanks to the smooth transport velocity, by a similar method as in the proof of Lemma \ref{lem:est,L2}, the equation for the density $\rho^\eps$ holds in the following sense:
$$
\d_t\rho + \langle u^\eps\rangle_\eps\cdot\nabla\rho -\div(  \kappa   \nabla\rho )=0\quad\hbox{ in }\quad L^2(L^2).
$$
In fact, we suppose a priori $\rho^\eps-1\in L^\infty_T(H^1\cap L^\infty)\cap L^2_T(H^2)$ for any $T\in (0,\infty)$.
By use of the interpolation inequality in dimension $3$
\begin{equation}\label{ineq:3d}
\|\nabla\rho\|_{L^4}\lesssim \|\Delta\rho\|_{L^2}^{1/2} \|\rho\|_{L^\infty}^{1/2},
\end{equation}
one has $\nabla\rho^\eps\in L^4_T(L^4)$.
Hence the scalar function $K^\eps:=K(\rho^\eps)\in L^\infty_T(H^1\cap L^\infty)\cap L^2_T(H^2)$ with the function $K$ as defined in the proof of Lemma \ref{lem:est,L2} satisfies Equation \eqref{eq:K} with the transport velocity $\langle u^\eps\rangle_\eps$.
Taking the $L^2$-inner product between it and $\Delta K^\eps$,  applying the following inequality (noticing $\div(\langle u^\eps\rangle_\eps)=0$):
$$
\left|\Int_{\R^3} \langle u^\eps\rangle_\eps \cdot\nabla K^\eps\Delta K^\eps \right|
=\left|\Int_{\R^3} \nabla K^\eps\cdot \nabla\langle u^\eps\rangle_\eps \cdot\nabla K^\eps  \right|
\leq \|\nabla u^\eps\|_{L^2}\|\nabla K^\eps\|_{L^4}^2
\leq C \|\nabla u^\eps\|_{L^2}\|\Delta K^\eps\|_{L^2},
$$
and then performing Young's Inequality and Estimate \eqref{bound_eps}, we arrive at
$$
\|\nabla K^\eps\|_{L^\infty_T(L^2)}+\|\Delta K^\eps\|_{L^2_T(L^2)}
\leq C(\ud\rho,\ov\rho) \Bigl(\|\nabla K^\eps(0)\|_{L^2}+\|u_0\|_{L^2}\Bigr)
\leq C\Bigl(\|\nabla\rho_0\|_{L^2}+\|u_0\|_{L^2}\Bigr).
$$
One easily finds that the above estimate also holds for $\rho^\eps$.
Therefore, taking into account also Energy Identity \eqref{energy iden:density}, we arrive at for all $T\in (0,\infty)$, $\eps>0$,
\begin{equation}\label{bound_eps:H1}
\|\rho^\eps-1\|_{L^\infty_T(H^1(\R^3))}+\|\nabla\rho^\eps\|_{L^2_T(H^1(\R^3))}
\leq C\|(\rho_0-1,u_0)\|_{H^1(\R^3)\times L^2(\R^3)}.
\end{equation}

Now we let $\eps\rightarrow 0$, then the same argument as in Subsection \ref{sec:convergence} ensures that there exists a global-in-time weak solution $(\rho,u)$ to System \eqref{system} such that Estimate \eqref{rho,v,energy} holds and $\nabla\rho\in C([0,\infty);L^2_w)$.
Define $v=u-\kappa\nabla \ln\rho$, then one easily checks that Equality \eqref{weak solution:v} holds for some divergence-free test function $\phi$ by virtue of \eqref{weak solution:u}.
This completes the proof of Theorem \ref{thm:global,v}.


\section{Well-posedness in dimension two}

In this section we aim to prove Theorem \ref{thm:2d}.
By the arguments before it, we just have to prove the global-in-time existence of a unique strong solution $(\rho,u)$  to Cauchy problem \eqref{system}-\eqref{cond:2d,initial}.
Indeed, by Theorem \ref{prop:solution-besov}, it rests to show a pseudo-conservation law concerning $L^\infty(H^2)\times L^\infty(H^1)$-norm of its  weak solutions  and moreover,  such weak solutions are also strong, by use of Proposition \ref{prop:rho}.

 The following lemma supplies the needed conservation law:
\begin{lem}\label{lem:2dest,H1}
We  assume  $(\varrho,u,\nabla\pi)$ to be a weak solution to System \eqref{system} with the initial data $\varrho_0,u_0$ satisfying the following condition:
\begin{equation}\label{cond:2dinitial,H1}
0<\ud\rho\leq \varrho_0+1\leq \ov\rho,\quad \varrho_0\in H^2(\R^2),\quad u_0\in H^1(\R^2),\quad \div u_0=0,
\end{equation}
then there exists one  constant $C_3$ depending only on $\ud\rho,\ov\rho,\|\varrho_0\|_{H^2(\R^2)},\|u_0\|_{H^1(\R^2)}$ such that the following a priori estimate holds true:
\begin{equation}\label{2destimate,H1}
\sup_{0\leq t\leq T}(\|\varrho\|_{H^2}^2+\|u\|_{H^1}^2)
+\Int^T_0 (\|\nabla\varrho\|_{H^2}^2+\|\nabla u\|_{H^1}^2+\|\d_t \varrho\|_{H^1}^2+\|\d_t u\|_{L^2}^2+\|\nabla\pi\|_{L^2}^2)\leq C_3.
\end{equation}
\end{lem}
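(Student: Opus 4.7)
The plan is to establish \eqref{2destimate,H1} by deriving uniform \emph{a priori} estimates on the smooth approximate solutions from Section~\ref{sec:regular}, then passing to the limit by weak lower semi-continuity. Lemma~\ref{lem:est,L2} already supplies uniform $X_2(T)\times X_1(T)\times X_{-1}(T)$ control on $(\varrho,u,\nabla\pi)$; in particular $\varrho\in L^\infty_T(H^1)\cap L^2_T(H^2)$, $u\in L^\infty_T(L^2)\cap L^2_T(H^1)$, $\d_t\rho\in L^2_T(L^2)$, and the two-dimensional interpolation \eqref{ineq:GN} gives $u,\,\nabla\varrho\in L^4_T(L^4)$ uniformly.

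The core of the proof is to close two second-order energy estimates simultaneously. Following Remark~\ref{rem:K}, I would work with $K=K(\rho)$ defined by $K'(\rho)=\kappa(\rho)$, $K(1)=0$, which satisfies \eqref{eq:K}. Applying $\Delta$ and testing with $\Delta K$ in $L^2(\R^2)$, using $\div u=0$ to cancel $\int u\cdot\nabla\Delta K\,\Delta K$ and integrating by parts the $\int\Delta u\cdot\nabla K\,\Delta K$ piece, yields an identity whose right-hand side is a sum of trilinear integrals of the form $\int\nabla u\cdot\nabla^j K\cdot\nabla^{3-j}K$ (with $j\in\{1,2\}$) and $\int\nabla\kappa\cdot\nabla^j K\cdot\nabla^{3-j}K$. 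In parallel, testing $\eqref{system}_2$ with $\d_t u$ gives, since $\div\d_t u=0$,
\begin{equation*}
\tfrac{d}{dt}\!\int_{\R^2}\!\mu|Au|^2\,dx+\int_{\R^2}\!\rho\,|\d_t u|^2\,dx
= \int_{\R^2}\!\d_t\mu\,|Au|^2\,dx-\int_{\R^2}\!\rho\,v\cdot\nabla u\cdot\d_t u\,dx,
\end{equation*}
with $\d_t\mu=\mu'(\rho)\d_t\rho$ and $\rho v=\rho u-\kappa\nabla\rho$. Summing the two identities and controlling every nonlinear term by the 2D inequality $\|f\|_{L^4}^2\lesssim\|f\|_{L^2}\|\nabla f\|_{L^2}$ and Young's inequality (absorbing small multiples of $\|\nabla\Delta K\|_{L^2}^2$ and $\|\d_t u\|_{L^2}^2$ into the left), I expect a coupled differential inequality of the form
\begin{equation*}
\tfrac{d}{dt}\bigl(\|\Delta K\|_{L^2}^2+\|\nabla u\|_{L^2}^2\bigr)+c\bigl(\|\nabla\Delta K\|_{L^2}^2+\|\d_t u\|_{L^2}^2\bigr)
\le h(t)\bigl(\|\Delta K\|_{L^2}^2+\|\nabla u\|_{L^2}^2+1\bigr),
\end{equation*}
with $h\in L^1(0,T)$ built from $\|\nabla u\|_{L^2}^2$, $\|u\|_{L^4}^4$, $\|\nabla\varrho\|_{L^4}^4$ and $\|\d_t\rho\|_{L^2}^2$. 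Gronwall then closes the $\|\Delta K\|_{L^2}$ and $\|\nabla u\|_{L^2}$ bounds; composition estimates together with $\rho\in[\ud\rho,\ov\rho]$ translate the first into $\varrho\in L^\infty_T(H^2)\cap L^2_T(H^3)$, while $\d_t u\in L^2_T(L^2)$ is read off from the dissipation.

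At this stage I would recast $\eqref{system}_2$ as the stationary Stokes problem
\[
-\div(2\mu Au)+\nabla\pi=-\rho\,\d_t u-\rho\,v\cdot\nabla u,
\]
whose right-hand side lies in $L^2_T(L^2)$, and invoke classical elliptic regularity (for the variable-coefficient Stokes operator with $\mu=\mu(\rho)$ now controlled in a sufficiently smooth class) to conclude $u\in L^2_T(H^2)$ and $\nabla\pi\in L^2_T(L^2)$. The bound $\d_t\varrho\in L^2_T(H^1)$ then follows from $\d_t\varrho=-u\cdot\nabla\rho+\div(\kappa\nabla\rho)$ by 2D product estimates using $u\in L^\infty_T(H^1)$ and $\varrho\in L^\infty_T(H^2)\cap L^2_T(H^3)$.

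The principal technical obstacle is the self-consistency of the Gronwall step. A sharp 2D bound for the trilinear term $\int(\nabla u)^T\nabla K\cdot\nabla\Delta K$ appearing in the $\Delta K$-identity relies on $\|\nabla u\|_{L^4}\lesssim\|\nabla u\|_{L^2}^{1/2}\|u\|_{H^2}^{1/2}$, i.e.\ on $H^2$-regularity of $u$, which is itself only delivered by the Stokes step \emph{after} the $\|\nabla u\|_{L^2}$-bound has been closed. The remedy is to run the two energy identities in tandem, so that $\|\d_t u\|_{L^2}^2$ (equivalent to $\|u\|_{H^2}^2$ modulo already-bounded Stokes correctors) and $\|\nabla\Delta K\|_{L^2}^2$ both appear on the dissipation side and can be absorbed via weighted Young's inequalities; organising this coupling carefully is what makes the bookkeeping close.
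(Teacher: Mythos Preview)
Your proposal is essentially correct but takes a more convoluted route than the paper, and the logical ordering in your final two paragraphs is not quite right.

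The paper \emph{decouples} the two estimates rather than running them in tandem. The key observation you are missing is that the velocity estimate closes on its own, using only the $X_2\times X_1$ bounds from Lemma~\ref{lem:est,L2}. Concretely, the paper rearranges $\eqref{system}_2$ to obtain the pointwise identity
\[
\mu\,\Delta u=\rho\,\d_t u+\rho u\cdot\nabla u-\kappa\nabla\rho\cdot\nabla u-2\mu'\nabla\rho\cdot Au+\nabla\pi,
\]
and reads off $\|\nabla\pi\|_{L^2}$ from the scalar elliptic equation $\div(\rho^{-1}\nabla\pi)=-\div(\ldots)$. Together these give $\|\Delta u\|_{L^2}\le C\bigl(\|\d_t u\|_{L^2}+(\|u\|_{L^4}^2+\|\nabla\rho\|_{L^4}^2)\|\nabla u\|_{L^2}\bigr)$, so that $\|\nabla u\|_{L^4}^2$ is controlled by $\|\d_t u\|_{L^2}\|\nabla u\|_{L^2}$ plus already-bounded quantities. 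Substituting this into the $\d_t u$-energy identity closes by Gronwall with coefficient $\|u\|_{L^4}^4+\|\nabla\rho\|_{L^4}^4+\|\d_t\rho\|_{L^2}^2\in L^1_T$, yielding $\nabla u\in L^\infty_T(L^2)$ and $\d_t u,\Delta u,\nabla\pi\in L^2_T(L^2)$ \emph{before} touching the density at the $H^2$ level. Only then does the paper run the $\Delta K$-identity; the term $\int\Delta u\cdot\nabla K\,\Delta K$ is \emph{not} integrated by parts but bounded directly by $\|\Delta u\|_{L^2}\|\nabla K\|_{L^4}\|\Delta K\|_{L^4}$, with $\|\Delta u\|_{L^2_T(L^2)}$ now a known datum feeding the Gronwall for $\|\Delta K\|_{L^2}$.

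Your coupled scheme can be made to work, but only if the equivalence ``$\|\d_t u\|_{L^2}\sim\|\Delta u\|_{L^2}$ modulo lower order'' is established and substituted \emph{inside} the Gronwall loop---which is exactly the relation above. Your exposition defers this to ``classical elliptic regularity for the variable-coefficient Stokes operator'' \emph{after} Gronwall, which is logically too late for the very term $\int(\nabla u)^T\nabla K\cdot\nabla\Delta K$ you flagged; and invoking a Stokes black box is heavier than the two elementary identities the paper uses. In short: do the velocity first, keep $\Delta u$ in the $K$-estimate rather than integrating it by parts, and the coupling you worry about disappears.
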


\begin{proof}
It is easy to see that (\ref{2destimate}) already holds by Lemma \ref{lem:est,L2}.
 As usual, we can assume a priori that $\varrho\in L^\infty(H^2)$, $\nabla\varrho\in L^2(H^2)$ and $u\in L^\infty(H^1)$, $\nabla u\in L^2(H^1)$.
 In the following we will use thoroughly the Gagliardo-Nirenberg inequality \eqref{ineq:GN} and the following interpolation inequality
 \begin{equation}\label{ineq:GNinfty}
\|f\|_{L^\infty(\R^2)}\lesssim \|f\|_{L^2(\R^2)}^{1/2}\|\Delta f\|_{L^2(\R^2)}^{1/2}.
\end{equation}

We first consider the equation for $u$. We take the $L^2$ inner product between Equation  $(\ref{system})_2$  and $\d_t u$ to arrive at
\begin{align}\label{equation:u'}
0&=\Int_{\R^2} \rho|\d_t u|^2+\frac{d}{dt}\Int_{\R^2} 2\mu |Au|^2 +\Int_{\R^2} \rho u\cdot\nabla u\cdot \d_t u-\kappa\nabla\rho\cdot\nabla u\cdot\d_t u-2\mu'|Au|^2\d_t\rho.
\end{align}
On the other hand, from the equation $(\ref{system})_2-\eqref{system}_3$ we have
\begin{align}\label{eq:u''}
\Delta u=\mu^{-1}\Bigl(\rho\d_t u+\rho u\cdot\nabla u-\kappa\nabla\rho\cdot\nabla u-2\mu'\nabla\rho\cdot Au+\nabla\pi\Bigr),
\end{align}
and the elliptic equation for $\pi$ (similar to \eqref{pi,n,eq})
\begin{equation}\label{eq:pi}
\div(\rho^{-1}\nabla\pi)=-\div\Bigl(( u-\rho^{-1}\kappa\nabla\rho)\cdot\nabla u+2 \mu \nabla\rho^{-1}\cdot Au\Bigr).
\end{equation}
Equation \eqref{eq:pi} above gives us the estimate for $\nabla\pi$:
\begin{align}\label{est:pi}
\|\nabla\pi\|_{L^2}
\leq C(\|u\|_{L^4}+\|\nabla \rho\|_{L^4})\|\nabla u\|_{L^4}.
\end{align}
Therefore Equality \eqref{eq:u''} entails
\begin{equation*}
\|\Delta u\|_{L^2}
\leq C\Bigl(\|\d_t u\|_{L^2}+(\|u\|_{L^4}+\|\nabla \rho\|_{L^4})\|\nabla u\|_{L^4}\Bigr),
\end{equation*}
which implies, by applying (\ref{ineq:GN}) on $\nabla u$ and Young's Inequality,
\begin{equation}\label{est:u''}
\|\Delta u\|_{L^2}\leq C\Bigl(\|\d_t u\|_{L^2}+(\|u\|_{L^4}^2+\|\nabla \rho\|_{L^4}^2)\|\nabla u\|_{L^2}\Bigr),
\end{equation}
and hence
\begin{equation}\label{est:u',L4}
\|\nabla u\|_{L^4}^2\leq C\Bigl(\|\d_t u\|_{L^2}\|\nabla u\|_{L^2}+(\|u\|_{L^4}^2+\|\nabla \rho\|_{L^4}^2)\|\nabla u\|_{L^2}^2\Bigr).
\end{equation}

Set
$$I\triangleq \Int_{\R^2} \rho u\cdot\nabla u\cdot \d_t u-\kappa\nabla\rho\cdot\nabla u\cdot\d_t u-2\mu'|Au|^2\d_t\rho,$$
then
\begin{align*}
\|I\|_{L^1([0,T])}
&\leq C \Int^T_0 (\|u\|_{L^4}+\|\nabla\rho\|_{L^4})\|\nabla u\|_{L^4}\|\d_t u\|_{L^2}
                       +\|Au\|_{L^4}^2\|\d_t\rho\|_{L^2}.
\end{align*}
Thus by Estimate \eqref{est:u',L4} and Young's Inequality, we finally arrive at
\begin{align*}
\|I\|_{L^1([0,T])}
&\leq \eps\Int^T_0 \|\d_t u\|_{L^2}^2
+C_\eps \Int^T_0 (\|u\|_{L^4}^4+\|\nabla\rho\|_{L^4}^4+\|\d_t\rho\|_{L^2}^2)\|\nabla u\|_{L^2}^2.
\end{align*}

Therefore by  the equality $\|\nabla u\|_{L^2}^2=\int |Au|^2$ and Estimate \eqref{2destimate}, we can choose sufficiently small $\varepsilon$ to deduce from \eqref{equation:u'} that
\begin{equation}\label{estimate:u'}
\sup_{0\leq t\leq T}\|\nabla u\|_{L^2}^2
+\Int^T_0 \|\d_t u\|_{L^2}^2\leq Ce^{C_2}\|\nabla u_0\|_{L^2}^2.
\end{equation}
Moreover, by estimates \eqref{est:pi}, \eqref{est:u''}, \eqref{est:u',L4} and \eqref{estimate:u'}, we derive
\begin{align}\label{estimate:u''}
\Int^T_0 \|\Delta u\|_{L^2}^2+\|\nabla u\|_{L^4}^4+\|\nabla\pi\|_{L^2}^2
\leq C(\|\varrho_0\|_{H^2},\|u_0\|_{H^1}).
\end{align}

\smallbreak

Now we turn  to the density $\rho$. We further apply ``$\Delta$'' to Equation (\ref{eq:K}) of $K$, yielding the equation for the scalar function $\cK=\Delta K\in L^\infty(L^2)\cap L^2(H^1)$:
\begin{equation}\label{equation:cK}
\d_t \cK+2\nabla u:\nabla^2 K+u\cdot\nabla\cK+\Delta u\cdot\nabla K-\Delta(\kappa \cK)=0.
\end{equation}
Hence again taking  the $L^2$ inner product between (\ref{equation:cK}) and $\cK$ shows
\begin{equation}\label{eq:cK}
\frac 12\frac{d}{dt}\Int_{\R^2} \cK^2+\Int_{\R^2} \kappa |\nabla \cK|^2+\Int_{\R^2}2(\nabla u:\nabla^2 K)\,\cK+(\Delta u\cdot\nabla K)\cK+ \kappa'\cK\nabla\rho\cdot\nabla\cK=0.
\end{equation}
Set
$$J\triangleq \Int_{\R^2}2(\nabla u:\nabla^2 K)\,\cK+(\Delta u\cdot\nabla K)\cK+ \kappa'\cK\nabla\rho\cdot\nabla\cK,$$
then noticing  $\|\nabla\rho\|_{L^4}\leq C\|\nabla K\|_{L^4}$, we have
$$\|J\|_{L^1([0,T])}\leq C\Int^T_0 (\|\nabla u\|_{L^2}\|\nabla^2 K\|_{L^4}+\|\Delta u\|_{L^2}\|\nabla K\|_{L^4}+\|\nabla K\|_{L^4}\|\nabla\cK\|_{L^2})\|\cK\|_{L^4}.$$
By use of $\|\nabla^2 K\|_{L^4},\|\cK\|_{L^4}\lesssim \|\cK\|_{L^2}^{1/2}\|\nabla\cK\|_{L^2}^{1/2},$ we have from above that
$$\|J\|_{L^1([0,T])}\leq \eps\Int^T_0 \|\nabla\cK\|_{L^2}^2 +C_\eps\Int^T_0 (\|\nabla u\|_{L^2}^{2}+\|\nabla K\|_{L^4}^4)\|\cK\|_{L^2}^2+\|\Delta u\|_{L^2}^2 .$$
Therefore by \eqref{2destimate}, (\ref{estimate:K}),  (\ref{estimate:u''}) and Gronwall's Inequality, Inequality \eqref{eq:cK} gives
\begin{align}
\sup_{0\leq t\leq T}\|\cK(t)\|_{L^2}^2+\Int^T_0\|\nabla\cK\|_{L^2}^2
&\leq C\exp\Bigl\{\int^T_0 \|\nabla u\|_{L^2}^2+\|\nabla K\|_{L^4}^4\Bigr\}
      \left(\|\cK(0)\|_{L^2}^2+\Int^T_0 \|\Delta u\|_{L^2}^2\right) \notag\\
&\leq C(\|\varrho_0\|_{H^2},\|u_0\|_{H^1}),\label{estimate:cK}
\end{align}
and hence
\begin{equation}\label{estimate:K,L4}
\sup_{0\leq t\leq T}\|\nabla K(t)\|_{L^4}\leq \sup_{0\leq t\leq T}\|\nabla K(t)\|_{L^2}^{1/2}\|\cK(t)\|_{L^2}^{1/2}\leq C(\|\varrho_0\|_{H^2},\|u_0\|_{H^1}).
\end{equation}
Furthermore, by view of  the two identities
$$
\kappa\Delta\varrho=\cK-\nabla\kappa\cdot\nabla\varrho
\hbox{ and }
\kappa\nabla\Delta\varrho=\nabla\cK-\nabla\kappa\Delta\varrho
                                       -\nabla\varrho\cdot\nabla^2\kappa
                                       -\nabla\kappa\cdot\nabla^2\varrho,
$$
we get from  \eqref{ineq:GN} and \eqref{ineq:GNinfty}  that
\begin{equation}\label{estimate:temp''}
\sup_{0\leq t\leq T}\|\Delta \varrho(t)\|_{L^2}^2+\Int^T_0\|\nabla\Delta\varrho\|_{L^2}^2\leq C(\|\varrho_0\|_{H^2},\|u_0\|_{H^1}).
\end{equation}
Moreover, Equation $\eqref{system}_1$ ensures
$$\nabla(\d_t\rho)=-\nabla\rho\cdot Du-u\cdot\nabla^2\rho+\nabla\cK,$$
which by above yields
\begin{equation}\label{estimate:rho_t}
\Int^T_0\|\nabla\d_t\varrho\|_{L^2}^2\leq C(\|\varrho_0\|_{H^2},\|u_0\|_{H^1}),
\end{equation}
which together with the estimates (\ref{2destimate}), (\ref{estimate:u'}), (\ref{estimate:u''}) and (\ref{estimate:temp''}) give (\ref{2destimate,H1}).
\end{proof}

\begin{rem}\label{rem:pi:L1}

It is easy to deduce from \eqref{eq:pi} that
$$
-\rho^{-1} \Delta\pi=\nabla\rho^{-1}\cdot\nabla\pi+(\nabla u-\kappa\rho^{-1}\nabla\rho):\nabla u-
  \mu\nabla\rho^{-1}\cdot\Delta u.
$$
Thus by the uniform bound \eqref{2destimate,H1}, one gets
$\|\Delta\pi\|_{L^1(L^2)}\leq C(\|\varrho_0\|_{H^2(\R^2)}, \|u_0\|_{H^1(\R^2)})$.
\end{rem}

The next lemma is devoted to the weak-strong uniqueness result under the initial condition \eqref{cond:2dinitial,H1}.
\begin{lem}\label{lem:global}
Under the same hypotheses of Lemma \ref{lem:2dest,H1}, the weak solutions belong to (strong) solution space $E_T$ for any $T\in (0,+\infty)$.
\end{lem}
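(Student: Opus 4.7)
The overall strategy is to combine the local well-posedness of Theorem~\ref{prop:solution-besov} with a continuation argument based on a weak--strong uniqueness principle. Since $\varrho_0\in H^2(\R^2)\hookrightarrow B^1_{2,1}(\R^2)$ and $u_0\in H^1(\R^2)\hookrightarrow B^0_{2,1}(\R^2)$ by Proposition~\ref{prop:embed}(ii), Theorem~\ref{prop:solution-besov} supplies a unique local strong solution $(\tilde\varrho,\tilde u,\nabla\tilde\pi)\in E_{T_c}$. Iterating this theorem up to the maximal lifespan $T^*$, one gets the standard extension criterion: $T^*<+\infty$ forces $\limsup_{t\to T^*}(\|\tilde\varrho(t)\|_{B^1_{2,1}}+\|\tilde u(t)\|_{B^0_{2,1}})=+\infty$.

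The heart of the proof is a weak--strong uniqueness argument on $[0,T^*)$. Let $(\varrho,u,\nabla\pi)$ denote the weak solution of Lemma~\ref{lem:2dest,H1}, and set $\da=\tilde\varrho-\varrho$, $\du=\tilde u-u$, $\dPi=\tilde\pi-\pi$. These differences satisfy a coupled linear system obtained by subtracting the two versions of \eqref{system},
\[
\d_t\da+u\cdot\nabla\da-\div(\kappa\nabla\da)=\div\bigl((\tilde\kappa-\kappa)\nabla\tilde\varrho\bigr)-\du\cdot\nabla\tilde\varrho,
\]
\[
\rho\d_t\du+\rho u\cdot\nabla\du-\div(2\mu A\du)+\nabla\dPi=\cF(\da,\du;\tilde\varrho,\tilde u,\nabla\tilde\pi),
\]
where $\cF$ is linear in $(\da,\du)$ with coefficients built from the strong solution. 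Taking the $L^2$ inner product with $\da$ and $\du$ respectively and using $\div u=\div\tilde u=0$ to kill the convection terms yields a Gronwall-type inequality for $\|\da\|_{L^2}^2+\int\rho|\du|^2$, the dissipation $\int\kappa|\nabla\da|^2+\int 2\mu|A\du|^2$ being controlled by bilinear remainders. Those remainders close because the strong solution supplies the right integrability: from $\tilde u\in L^1_T(B^2_{2,1})$ combined with the embedding $B^1_{2,1}(\R^2)\hookrightarrow L^\infty$ of Proposition~\ref{prop:embed}(iii) we get $\nabla\tilde u\in L^1_T(L^\infty)$; interpolating $\tilde\varrho\in L^\infty_T(B^1_{2,1})\cap L^1_T(B^3_{2,1})$ and invoking the same embedding gives $\nabla\tilde\varrho\in L^2_T(L^\infty)$; and the composition/product estimate \eqref{action}--\eqref{product} gives $\|\tilde\kappa-\kappa\|\leq C\|\da\|$ in suitable norms. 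Gronwall then forces $\da\equiv 0$, $\du\equiv 0$, hence $\varrho=\tilde\varrho$ and $u=\tilde u$ on $[0,T^*)$.

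To conclude, the a priori bound \eqref{2destimate,H1} of Lemma~\ref{lem:2dest,H1} applied to the weak solution gives $\|\varrho\|_{L^\infty_T(H^2)}+\|u\|_{L^\infty_T(H^1)}\leq C_3$ for every finite $T$. Via $H^2\hookrightarrow B^1_{2,1}$ and $H^1\hookrightarrow B^0_{2,1}$ (Proposition~\ref{prop:embed}(ii)) together with the identification from the preceding paragraph, $\|\tilde\varrho(t)\|_{B^1_{2,1}}+\|\tilde u(t)\|_{B^0_{2,1}}$ stays bounded as $t\to T^*$. The extension criterion of the first paragraph thus rules out $T^*<+\infty$, so $T^*=+\infty$ and $(\varrho,u,\nabla\pi)=(\tilde\varrho,\tilde u,\nabla\tilde\pi)\in E_T$ for all finite $T$. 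The main obstacle is the weak--strong uniqueness step: because $\varrho$ only lies in $L^\infty(H^2)$ (so $\nabla\varrho\notin L^\infty$) and $\nabla\pi$ only in $L^2_T(L^2)$, the cross terms in the energy estimate for $(\da,\du)$ must be closed by Gagliardo--Nirenberg \eqref{ineq:GN} and Besov product estimates, exploiting the asymmetric regularity of the strong solution, rather than any pointwise control of $\nabla\varrho$ or $\nabla\pi$.
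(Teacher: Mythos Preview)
Your approach is genuinely different from the paper's, and the comparison is instructive.

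The paper proves Lemma~\ref{lem:global} \emph{directly}: it rewrites \eqref{system} in the parabolic form \eqref{system_para}, applies the linear Besov estimate of Proposition~\ref{prop:rho} to $\varrho$ and to $u$, bounds the forcing terms in $L^1_T(B^1_{2,1})$ and $L^1_T(B^0_{2,1})$ via the product rule of Proposition~\ref{prop:product}, and closes a Gronwall inequality using the $H^2\times H^1$ bounds \eqref{2destimate,H1} together with the pressure control of Remark~\ref{rem:pi:L1}. No auxiliary strong solution appears: the weak solution itself is shown to sit in $E_T$.

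Your route---local strong solution, weak--strong uniqueness, continuation via blow-up---is precisely the logic the paper deploys \emph{after} Lemma~\ref{lem:global} to reach Theorem~\ref{thm:2d}; but there the paper can invoke the uniqueness of Theorem~\ref{prop:solution-besov} \emph{within} $E_T$, because Lemma~\ref{lem:global} has already placed the weak solution in $E_T$. You are asking for strictly more: a genuine weak--strong uniqueness between a solution known only to lie in $L^\infty(H^2)\times L^\infty(H^1)$ and one in $E_T$. That statement is not available in the paper, and the paper explicitly warns (just before Theorem~\ref{prop:solution-besov}) that difference estimates are delicate here because they demand $L^\infty$ control on $\da$, which $H^1(\R^2)$ does not give. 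Your $L^2$ energy sketch for $(\da,\du)$ is plausible, but the momentum cross terms---of type $\da\,\d_t\tilde u$ or $\div\bigl(2(\tilde\mu-\mu)A\tilde u\bigr)$---require a careful interplay of \eqref{ineq:GN} with the $L^\infty_T(H^1)\cap L^2_T(H^2)$ bounds on $\tilde u$ (themselves coming from Lemma~\ref{lem:2dest,H1} applied to the strong solution) that you do not carry out. The strategy can likely be completed, but the weak--strong uniqueness step is a separate lemma of comparable weight to the paper's direct estimate, not a free consequence of anything already established.
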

\begin{proof}
Denote by $C_0$ some harmless constant depending only on   $\|\varrho_0\|_{H^2(\R^2)}, \|u_0\|_{H^1(\R^2)}$ in the following.
Let's  rewrite  the equations for $\rho$ and $u$ in System \eqref{system} as following:
  \begin{equation}\label{system_para}\left\{\begin{array}{c}
  \d_t\rho-\div(\kappa\nabla\rho)=-u\cdot\nabla\rho,\\
  \d_t u-\div(\mu\rho^{-1}\nabla u)=-(u-(\kappa\rho^{-1}+\mu\rho^{-2})\nabla\rho)\cdot\nabla u-\rho^{-1}\nabla\mu\cdot Du-\rho^{-1}\nabla\pi.
  \end{array}\right.\end{equation}

By Proposition \ref{prop:basic} and Proposition \ref{prop:product}, the $L^1_T(B^1_{2,1})$ (resp. $L^1_T(B^0_{2,1})$)-norm of the right hand side of Equation $\eqref{system_para}_1$ (resp. $\eqref{system_para}$)  can be bounded by the  following two quantities respectively:
$$
C\Int^T_0  \|  u \|_{B^1_{2,1}} \|\nabla\rho\|_{B^1_{2,1}} \,dt
  \quad\hbox{and}\quad
C\Int^T_0 \Bigl((\|  u \|_{B^1_{2,1}}+\|\varrho\|_{B^2_{2,1}}) \|\nabla u\|_{B^0_{2,1}}
  + \|\varrho\|_{B^1_{2,1}} \|\nabla\pi\|_{B^0_{2,1}}\Bigr)\,dt.
$$
Since  $\|\varrho\|_{B^1_{2,1}}\leq \|\varrho\|_{H^2}$ and $\|\nabla\pi\|_{B^0_{2,1}}\lesssim \|\nabla\pi\|_{L^2}+\|\Delta\pi\|_{L^2}$, we have from Estimate \eqref{2destimate,H1} and Equation \eqref{eq:pi} that
$$\displaylines{
\Int^T_0  \|\varrho\|_{B^1_{2,1}} \|\nabla\pi\|_{B^0_{2,1}} \,dt
\lesssim
  \Int^T_0  C_0\Bigl(\|\Delta\pi\|_{L^2} +
  \|(u-\kappa\rho^{-1}\nabla\rho)\cdot\nabla u+2\mu\nabla\rho^{-1}\cdot Au\|_{L^2} \Bigr)\,dt.
}$$
Because  $B^0_{2,1}\hookrightarrow L^2$, the product estimates entails
$$
\Int^T_0  \|\varrho\|_{B^1_{2,1}} \|\nabla\pi\|_{B^0_{2,1}} \,dt
\leq
  C_0 \Int^T_0  \|\Delta\pi\|_{L^2}
  +  C_0\Int^T_0  (\|  u \|_{B^1_{2,1}}+\|\varrho\|_{B^2_{2,1}}) \|\nabla u\|_{B^0_{2,1}} \,dt.
$$

On the other side, it is easy to check that for any $C^1(\R,\R)$-function $f$ with $f(1)=0$,
$$\|f(\rho)\|_{L^\infty(H^2)}+\|\nabla f(\rho)\|_{L^2(H^2)}\leq C_0.$$

Therefore, for any solution $(\varrho,u)$ to System \eqref{system}, Proposition \ref{prop:rho} tells us that
$$\displaylines{
\|\varrho\|_{L^\infty_T(B^1_{2,1})\cap L^1_T(B^3_{2,1})}
+\|u\|_{L^\infty_T(B^0_{2,1})\cap L^1_T(B^2_{2,1})}
\hfill\cr\hfill
\leq C_0\Bigl(1 + \Int^T_0 \|(\varrho,u)\|_{L^2}
+ \Int^T_0 \|  u \|_{B^1_{2,1}} \|\nabla\rho\|_{B^1_{2,1}}
+ (\|  u \|_{B^1_{2,1}}+\|\varrho\|_{B^2_{2,1}}) \|\nabla u\|_{B^0_{2,1}}
+  \|\Delta\pi\|_{L^2}  \Bigr).
}$$
As we have interpolation inequalities
$$
\| u\|_{B^1_{2,1}}\lesssim \|u\|_{B^0_{2,1}}^{1/2}\|u\|_{B^2_{2,1}}^{1/2},
\quad
\| \varrho\|_{B^2_{2,1}}\lesssim \|\varrho\|_{B^1_{2,1}}^{1/2}\|\varrho\|_{B^3_{2,1}}^{1/2},$$
hence, by Young's Inequality and Gronwall's Inequality again, the above inequality implies
$$\displaylines{
\|\varrho\|_{L^\infty_T(B^1_{2,1})\cap L^1_T(B^3_{2,1})}
+\|u\|_{L^\infty_T(B^0_{2,1})\cap L^1_T(B^2_{2,1})}
\hfill\cr\hfill
\leq C_0\exp\Bigl\{\Int^T_0   \|\nabla\rho\|_{B^1_{2,1}}^2+  \|\nabla u\|_{B^0_{2,1}}  ^2\Bigr\}
\Bigl(1 + \|(\varrho,u)\|_{L^1_T(L^2)}
+  \|\Delta\pi\|_{L^1_T(L^2)}  \Bigr).
}$$
It is easy to find from Estimate \eqref{2destimate,H1} and Remark \ref{rem:pi:L1} that
$$
\|\varrho\|_{L^\infty_T(B^1_{2,1})\cap L^1_T(B^3_{2,1})}
+\|u\|_{L^\infty_T(B^0_{2,1})\cap L^1_T(B^2_{2,1})}
\leq C_0\
\Bigl(1 + \|(\varrho,u)\|_{L^1_T(L^2)} \Bigr).
$$
This completes the proof.
\end{proof}

Now with  Lemma \ref{lem:global} in hand, we will check that there exists a globally existing strong solution to Cauchy problem \eqref{system}-\eqref{cond:2d,initial}.
 Firstly,  Theorem \ref{prop:solution-besov} ensures that there exists a unique strong solution $(\rho_1,u_1,\nabla\pi_1)$ on its lifespan $[0,T^\ast)$ for some positive time $T^\ast> T_c$.
Hence there exists   $T_0\in (0,T^\ast)$ such that $\varrho_1(T_0)\in B^2_{2,1}\hookrightarrow H^2$ and $u_1(T_0)\in B^1_{2,1}\hookrightarrow H^1$.
Let $(\rho_2,u_2,\nabla\pi_2) $ to be a weak solution which evolves from the initial data   $\rho_1(T_0),u_1(T_0)$.
Thus, since Lemma \ref{lem:global} ensures that $(\rho_2,u_2,\nabla\pi_2)\in E_T$ for any positive finite $T$, it coincides with the strong solution $(\rho_1,u_1,\nabla\pi_1)$  on the time interval $[T_0,T^\ast)$ by the uniqueness result.
 Furthermore, we can choose $T^\ast=+\infty$. In fact, if $T^\ast<+\infty$, then by the global-in-time boundedness of $\|\rho_2-1\|_{L^\infty(B^1_{2,1})}$ and $\|u_2\|_{L^\infty(B^0_{2,1})}$ given by \eqref{2destimate,H1}, the lifespan  of $(\varrho_1,u_1,\nabla\pi_1)$ should go beyond $[0,T^\ast)$.
 This is a controdiction.

\subsection*{Acknowledgment}
The author would like to thank Professor Didier Bresch sincerely for informing the global-in-time existence result for System \eqref{system_origin} in \emph{smooth bounded domain}, under exactly the same coefficient relationship assumption \eqref{cond:coeff}.
 The author would also like to take the opportunity to thank her supervisor Professor Rapha\"{e}l Danchin  for fruitful discussions.

\bibliographystyle{plain}
\bibliography{myreference}

\end{document}